\newtheorem{theorem}{Theorem}
\newtheorem{corollary}[theorem]{Corollary}
\newtheorem{definition}[theorem]{Definition}
\newtheorem{example}[theorem]{Example}
\newtheorem{remark}[theorem]{Remark}
\newtheorem{lemma}[theorem]{Lemma}
\newtheorem{proposition}[theorem]{Proposition}
\def\qed{\vbox{\hrule
 \hbox{\vrule\hbox to 5pt{\vbox to 8pt{\vfil}\hfil}\vrule}\hrule}}
\journal{Linear Algebra and Its Applications}
\begin{document}
\begin{frontmatter}

\title{Realizable lists via the spectra of structured
matrices}

\author{Cristina Manzaneda}
\address{Departamento de Matem\'{a}ticas, Facultad de Ciencias. Universidad Cat\'{o}lica del Norte. Av. Angamos 0610 Antofagasta, Chile.}
\ead{cmanzaneda@ucn.cl}

\author[]{Enide Andrade\corref{cor1}}
\address{CIDMA-Center for Research and Development in Mathematics and Applications
         Departamento de Matem\'atica, Universidade de Aveiro, 3810-193, Aveiro, Portugal.}
\ead{enide@ua.pt}
\cortext[cor1]{Corresponding author}

\author{Mar\'{\i}a Robbiano}
\address{Departamento de Matem\'{a}ticas, Facultad de Ciencias. Universidad Cat\'{o}lica del Norte. Av. Angamos 0610 Antofagasta, Chile.}
\ead{mrobbiano@ucn.cl}

\begin{abstract}

A square matrix of order $n$ with $n\geq 2$ is called a \textit{permutative matrix} or permutative when all its rows (up to the first one) are permutations of precisely its first row. In this paper, the spectra of a class of permutative matrices are studied.
In particular, spectral results for matrices partitioned into $2$-by-$2$ symmetric blocks are presented and, using these results sufficient conditions on a given list to be the list of eigenvalues of a nonnegative permutative matrix are obtained and the corresponding permutative matrices are constructed. Guo perturbations on given lists are exhibited.

\end{abstract}

\begin{keyword}
permutative matrix; symmetric matrix; inverse eigenvalue problem; nonnegative matrix.

\MSC 15A18, 15A29, 15B99.

\end{keyword}

\end{frontmatter}

\section{Introduction}
We present here a short overview related with the nonnegative inverse eigenvalue problem (NIEP) that is the problem of determining necessary and sufficient conditions for a list of complex
numbers

\begin{equation}
\sigma =\left( \lambda _{1},\lambda _{2},\ldots ,\lambda _{n}\right)
\label{list}
\end{equation}
to be the spectrum of a $n$-by-$n$ entrywise nonnegative matrix $A$. If a list $\sigma$ is the
spectrum of a nonnegative matrix $A$, then $\sigma$ is \textit{realizable} and the matrix $A$ \textit{realizes} $\sigma,$ (or, that is a realizing matrix for the list). This problem attracted the attention of many authors over $50+$ years and it was firstly considered by Sule\u{\i}manova \cite{SLMNva} in $1949$. Although some partial results were obtained the NIEP is an open problem for $n \geq 5$.  In \cite{LwyLdn} this problem was solved for $n=3$ and for matrices of order $n=4$ the problem was solved  in \cite{Meehan} and \cite{Mayo}. It has been studied in its general form in e.g. \cite{Boyle,Johnson,Laffey2,muitos,LwyLdn,SmgcH,NN3,GuoWen}. When the realizing nonnegative matrix is required to be symmetric (with, of course, real eigenvalues) the problem is designated by symmetric nonnegative inverse eigenvalue problem (SNIEP) and it is also an open problem. It has also been the subject of considerable attention e.g \cite{Fiedler,Laffey,LMc,Soules}. The problem of which lists of $n$ real numbers can occur as eigenvalues of an $n$-by-$n$  nonnegative matrix is called real nonnegative inverse eigenvalue problem (RNIEP), and some results can be seen in e.g. \cite{Boro,Friedland,HzlP, RS, SRM}.
In what follows $\sigma\left(A\right)$  denotes the set of eigenvalues of a square matrix $A$.
Below are listed some necessary conditions on a list of complex numbers $\sigma =\left( \lambda _{1},\lambda _{2},\ldots ,\lambda _{n}\right)$ to be the spectrum of a nonnegative matrix.

\begin{enumerate}
\item The Perron eigenvalue $\max \left\{ \left\vert \lambda \right\vert
:\lambda \in \sigma(A) \right\} $ belongs to $\sigma.$

\item The list $\sigma $ is closed under complex conjugation.

\item $s_{k}\left( \sigma \right) =\sum\limits_{i=1}^{n}\lambda
_{i}^{k}\geq 0.$

\item $s_{k}^{m}\left( \sigma \right) \leq n^{m-1}s_{km}\left( \sigma
\right)$ for $ k,m=1,2,\ldots $.
\end{enumerate}
The first condition listed above follows from the Perron-Frobenius theorem,
which is an important theorem in the theory of nonnegative matrices. The last condition was
proved by Johnson \cite{Johnson} and independently by Loewy and London \cite{LwyLdn}. The necessary
conditions that were presented for the NIEP are sufficient only when the list $\sigma$ has at
most three elements. The solution for NIEP was also found for lists with
four elements, while the problem for lists with five or more elements is
still open.

\begin{definition}
{\rm The list $\sigma$ in (\ref{list}) is a Sule\u{\i}manova spectrum if the $
\lambda ^{\prime }s$ are real numbers, $\lambda _{1}>0\geq \lambda _{2}\geq
\cdots \geq \lambda _{n}$ and $s_{1}\left( \sigma \right) \geq 0$.}
\end{definition}

Sule\u{\i}manova, \cite{SLMNva} stated (and loosely proved) that every such spectrum is
realizable. Fiedler \cite{Fiedler} proved that every Sule\u{\i}manova
spectrum is symmetrically realizable (i.e. realizable by a symmetric nonnegative matrix).

One of the most promising attempts to solve the NIEP is to identify the
spectra of certain structured matrices with known characteristic polynomials. Friedland in \cite{Friedland} and Perfect in \cite{Perfect} proved
Sule\u{\i}manova's result via companion matrices of certain polynomials.  However, constructing the companion matrix of a Sule\u{\i}manova's spectrum is computationally difficult. Recently, Paparella  \cite{PP} gave a constructive proof of Sule\u{\i}manova's result. The author defined \textit{permutative} matrix as follows.

\begin{definition} {\rm \cite{PP} \label{def2}
Let $\mathbf{x=}\left( x_{1},\ldots ,x_{n}\right) ^{T}\in \mathbb{C}^{n}$.\
Let $P_{2},\ldots ,P_{n}$ be permutation matrices.\ A \textit{permutative matrix}\
$P$ is a\ matrix which takes the form
\begin{equation*}
P=
\begin{pmatrix}
\mathbf{x}^{T} \\
\left( P_{2}\mathbf{x}\right) ^{T} \\
\vdots  \\
\left( P_{n-1}\mathbf{x}\right) ^{T} \\
\left( P_{n}\mathbf{x}\right) ^{T}%
\end{pmatrix}
.
\end{equation*}
}
\end{definition}

\noindent In \cite{PP}, explicit permutative matrices which realize Sule\u{\i}manova
spectra were found.
A few remarks concerning the brief history of permutative matrices are in
order.
\begin{enumerate}
\item Ranks of permutative matrices were studied by Hu et al. \cite{Hu}
\item Moreover, the author \cite{PP} proposed the interesting problem which asks
if all realizable spectra can be realizable by a permutative matrix or by a direct sum of
permutative matrices. An equivalent problem communicated to the author by R. Loewy is to find an extreme
nonnegative matrix \cite{Laffey2} with real spectrum that can not be realized by a
permutative matrix or a direct sum of permutative matrices. Loewy \cite{Lwy} resolved this problem in the negative by showing that the list $ \sigma= \left( 1,\frac{8}{25}+\frac{\sqrt{51}}{50},\frac{8}{25}+\frac{\sqrt{51}}{50},-\frac{4}{5}, -\frac{21}{25}\right) $ is realizable but cannot be realized by a permutative matrix or by a direct sum of permutative matrices.
\end{enumerate}

In this paper we call the problem as PNIEP when the NIEP involves permutative matrices.
Note that the lists considered along the paper are equivalent (up to a permutation of its elements).
Therefore, unless we say the contrary, we call a given $n$-tuple $\sigma$ or any permutation resulting from it, as ``the list". In consequence, any of these lists can be used.

In this work we will find spectral results for partitioned into $2$-by-$2$ blocks matrices and using these results sufficient conditions on given lists to be the list of eigenvalues of a nonnegative permutative matrix are obtained. The paper is organized as follows: At Section 2 some definitions and facts related to permutative matrices are given. At Section 3 spectral results for matrices partitioned into $2$-by-$2$ blocks are presented and the results are applied to NIEP, SNIEP and PNIEP. Some illustrative examples are provided. At Section 4 results for matrices with odd order are presented. Finally, at Section 5  Guo perturbations on lists of eigenvalues of this class of permutative matrices (in order to obtain a new permutative matrix) are studied.

\section{Permutatively equivalent matrices}
In this section some auxiliary results from \cite{PP} and some new definitions are introduced.
In \cite{PP} the following results were proven.

\begin{lemma}
\label{pappa copy(1)}\ \cite[Lemma 3.1]{PP} For $\mathbf{x}=\left(
x_{1},x_{2},\ldots,x_{n}\right)  ^{T}\in\mathbb{C}^{n}$, let
\begin{equation}
X=
\begin{pmatrix}
x_{1} & x_{2} & \ldots & x_{i} & \ldots & x_{n-1} & x_{n}\\
x_{2} & x_{1} & \ldots & x_{i} & \ldots & x_{n-1} & x_{n}\\
\vdots & \vdots & \ddots & \vdots & \ddots & \vdots & \vdots\\
x_{i} & x_{2} & \ddots & x_{1} & \ddots & \vdots & \vdots\\
\vdots & \vdots & \vdots & \vdots & \ddots & \vdots & \vdots\\
x_{n-1} & x_{2} & \ldots & \vdots & \vdots & x_{1} & x_{n}\\
x_{n} & x_{2} & \ldots & x_{i} & \ldots & x_{n-1} & x_{1}%
\end{pmatrix}
.\label{mX}%
\end{equation}
Then, the set of eigenvalues of $X$ is given by
\begin{equation}
\sigma(X)=\left\{
{\displaystyle\sum\limits_{i=1}^{n}}
x_{i},x_{1}-x_{2},x_{1}-x_{3},\ldots,x_{1}-x_{n}\right\}.
\end{equation}
\end{lemma}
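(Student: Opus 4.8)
The plan is to exhibit the eigenvectors explicitly. The matrix $X$ in \eqref{mX} is a permutative matrix of a very special kind: every row is obtained from $\mathbf{x}^T$ by transposing the entry in position $1$ with the entry in the position equal to the row index, so row $i$ equals $\mathbf{x}^T$ with $x_1$ and $x_i$ swapped (and row $1$ is $\mathbf{x}^T$ itself). Equivalently, writing $J$ for the all-ones matrix built from columns $2,\dots,n$ and using the structure, one sees that $X = \mathbf{1}\,\mathbf{x}^T + D$, where $D$ is the diagonal-like correction $D = \mathrm{diag}(0, x_1-x_2, x_1-x_3,\dots, x_1-x_n)$ — more precisely, $X$ and $\mathbf{1}\mathbf{x}^T$ agree off the diagonal, and on the diagonal $X$ has $x_1$ while $\mathbf{1}\mathbf{x}^T$ has $x_i$ in position $(i,i)$, so $X = \mathbf{1}\mathbf{x}^T + \mathrm{diag}(0,x_1-x_2,\dots,x_1-x_n)$. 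I would verify this decomposition directly from \eqref{mX}.

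First I would check the Perron-type eigenvalue: the all-ones vector $\mathbf{1}$ satisfies $X\mathbf{1} = \big(\sum_{i=1}^n x_i\big)\mathbf{1}$, since each row of $X$ is a permutation of $\mathbf{x}$ and hence has row sum $\sum_i x_i$. That accounts for the eigenvalue $\sum_{i=1}^n x_i$. Next, for each $j\in\{2,\dots,n\}$ I would consider the vector $v_j = e_1 - e_j$ (the difference of standard basis vectors). Using $X = \mathbf{1}\mathbf{x}^T + \mathrm{diag}(0, x_1-x_2,\dots,x_1-x_n)$, I compute $X v_j = \mathbf{1}\,\mathbf{x}^T(e_1-e_j) + \mathrm{diag}(\dots)(e_1-e_j) = (x_1-x_j)\mathbf{1} - (x_1-x_j) e_j$. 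This is not quite a scalar multiple of $v_j$, so I would instead look for eigenvectors of the form $v_j = e_j + c_j\mathbf{1}$ or, more cleanly, diagonalize via the rank-one-plus-diagonal structure: eigenvectors of $\mathbf{1}\mathbf{x}^T + D$ for the eigenvalues $d_{jj}=x_1-x_j$ (assuming for the moment these are distinct from each other and from the Perron value) are obtained by the standard secular-equation computation, giving eigenvector $(D - (x_1-x_j)I)^{-1}\mathbf{1}$ restricted appropriately, i.e. a vector supported on coordinate $j$ and the coordinates tied to the rank-one part. The cleanest route: verify directly that $w_j := e_j - e_1 + \frac{x_1-x_j}{\,\sum_i x_i - (x_1-x_j)\,}\,(\text{something})$ works — but the genuinely clean statement is that $X - (x_1-x_j)I$ is singular because its rows, after the shift, become dependent.

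So the approach I would actually commit to is the characteristic-polynomial computation: $\det(\lambda I - X) = \det\big(\lambda I - D - \mathbf{1}\mathbf{x}^T\big)$. Provided $\lambda$ is not one of the diagonal entries of $D$, factor out $\lambda I - D$ to get $\det(\lambda I - D)\,\big(1 - \mathbf{x}^T(\lambda I - D)^{-1}\mathbf{1}\big)$ by the matrix determinant lemma. The first factor is $\lambda\prod_{j=2}^n(\lambda - (x_1-x_j))$; the bracket, cleared of denominators, is a polynomial whose unique remaining root is $\lambda = \sum_{i=1}^n x_i$, which one checks by direct substitution (or by noting $\mathbf{1}$ is already known to be an eigenvector for that value). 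Handling the degenerate cases — when some $x_1 - x_j$ coincide, or when $\sum_i x_i$ equals some $x_1-x_j$ — is the only real subtlety; I would dispatch it by a continuity/density argument (the eigenvalues are continuous functions of $\mathbf{x}$, and the claimed multiset is the limit of the generic case) or by observing that the matrix determinant lemma identity $\det(\lambda I - X) = \lambda\prod_{j=2}^n(\lambda-(x_1-x_j)) - \mathbf{x}^T\operatorname{adj}(\lambda I - D)\mathbf{1}$ is a polynomial identity valid for all $\mathbf{x}$, and then simply compare it with $\prod$ over the claimed eigenvalue list. The main obstacle, then, is bookkeeping in the determinant-lemma expansion and confirming that $\sum_{i=1}^n x_i$ (and nothing else) survives as the extra root; everything else is Perron-vector verification plus a routine multilinear-algebra identity.
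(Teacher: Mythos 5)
Your argument rests on a decomposition that is false. $X$ and $\mathbf{1}\mathbf{x}^{T}$ do \emph{not} agree off the diagonal: the first column of $X$ is $(x_1,x_2,\ldots,x_n)^{T}$, while the first column of $\mathbf{1}\mathbf{x}^{T}$ is $x_1\mathbf{1}$. Row $i\geq 2$ of $X$ differs from $\mathbf{x}^{T}$ in \emph{two} positions, $(i,1)$ and $(i,i)$, so the correct correction term is $\sum_{i=2}^{n}(x_i-x_1)\,\mathbf{e}_i(\mathbf{e}_1-\mathbf{e}_i)^{T}$, which is neither diagonal nor rank one; it is not $\mathrm{diag}(0,x_1-x_2,\ldots,x_1-x_n)$. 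Consequently the matrix-determinant-lemma computation you commit to is performed on the wrong matrix, and it does not return the claimed list: already for $n=2$ the matrix $\mathbf{1}\mathbf{x}^{T}+\mathrm{diag}(0,x_1-x_2)$ has eigenvalues $x_1\pm\sqrt{x_1x_2}$, whereas $X$ has $x_1\pm x_2$. The failed check you ran along the way (that $X(\mathbf{e}_1-\mathbf{e}_j)$ is not proportional to $\mathbf{e}_1-\mathbf{e}_j$) is exactly the symptom of this error; the vectors $\mathbf{e}_1-\mathbf{e}_j$ are \emph{left} eigenvectors, not right ones, since rows $1$ and $j$ of $X$ coincide except in columns $1$ and $j$, which gives $(\mathbf{e}_1-\mathbf{e}_j)^{T}X=(x_1-x_j)(\mathbf{e}_1-\mathbf{e}_j)^{T}$. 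The part of your proposal that is sound is the Perron-type observation $X\mathbf{1}=\bigl(\sum_{i=1}^{n}x_i\bigr)\mathbf{1}$.

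For comparison: the paper offers no proof of this lemma, citing \cite[Lemma 3.1]{PP}, so the standard to meet is simply a correct self-contained argument, and a short one exists. In $X-\lambda I$ subtract row $1$ from each row $i\geq 2$; row $i$ becomes $\bigl(\lambda-(x_1-x_i)\bigr)(\mathbf{e}_1-\mathbf{e}_i)^{T}$. Then add columns $2,\ldots,n$ to column $1$: each row $i\geq 2$ is left with its single nonzero entry $x_1-x_i-\lambda$ in column $i$, and the $(1,1)$ entry becomes $\sum_{i=1}^{n}x_i-\lambda$. Expanding along the first column gives $\det(X-\lambda I)=\bigl(\sum_{i=1}^{n}x_i-\lambda\bigr)\prod_{i=2}^{n}\bigl(x_1-x_i-\lambda\bigr)$, which yields the full spectrum at once and makes your continuity/genericity discussion unnecessary. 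Alternatively, combine your right eigenvector $\mathbf{1}$ with the left eigenvectors $\mathbf{e}_1-\mathbf{e}_j$ above (they span the $(n-1)$-dimensional $X^{T}$-invariant subspace of zero-sum vectors) and finish by a trace or quotient argument; but the determinant computation is the cleanest complete route.
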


\begin{theorem} \label{pappa} \cite{PP} Let $\sigma=\left( \lambda_{1},\ldots,\lambda
_{n}\right)  $ be a Sule\u{\i}manova spectrum and consider the $n$-tuple
$\mathbf{x=}\left(  x_{1},x_{2},\ldots,x_{n}\right)  $, where
\[%
\begin{tabular}
[c]{ccc}%
$x_{1}=\frac{\lambda_{1}+\cdots+\lambda_{n}}{n}$ & and & $x_{i}=x_{1}%
-\lambda_{i}, \, 2\leq i\leq n,$
\end{tabular}
\
\]
then the matrix in (\ref{mX}) realizes $\sigma$.\ In particular, if
$\lambda_{1}+\cdots+\lambda_{n}=0$ the solution matrix, $X_{0}$ becomes
\[
X_{0}=
\begin{pmatrix}
0 & \left\vert \lambda_{2}\right\vert  & \ldots & \left\vert \lambda
_{i}\right\vert  & \ldots & \left\vert \lambda_{n-1}\right\vert  & \left\vert
\lambda_{n}\right\vert \\
\left\vert \lambda_{2}\right\vert  & 0 & \ldots & \left\vert \lambda
_{i}\right\vert  & \ldots & \left\vert \lambda_{n-1}\right\vert  & \left\vert
\lambda_{n}\right\vert \\
\vdots & \vdots & \ddots & \vdots & \ddots & \vdots & \vdots\\
\left\vert \lambda_{i}\right\vert  & \left\vert \lambda_{2}\right\vert  &
\ddots & 0 & \ddots & \vdots & \vdots\\
\vdots & \vdots & \vdots & \vdots & \ddots & \vdots & \vdots\\
\left\vert \lambda_{n-1}\right\vert & \left\vert \lambda_{2}\right\vert  & \vdots & \vdots & \vdots & 0 &
\left\vert \lambda_{n}\right\vert \\
\left\vert \lambda_{n}\right\vert  & \left\vert \lambda_{2}\right\vert  &
\ldots & \left\vert \lambda_{i}\right\vert  & \ldots & \left\vert \lambda
_{n-1}\right\vert  & 0
\end{pmatrix}.
\]
\end{theorem}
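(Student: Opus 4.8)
The plan is to reduce the statement of Theorem~\ref{pappa} to a direct application of Lemma~\ref{pappa copy(1)} together with the defining hypotheses of a Sule\u{\i}manova spectrum. First I would set $x_{1}=\frac{\lambda_{1}+\cdots+\lambda_{n}}{n}$ and $x_{i}=x_{1}-\lambda_{i}$ for $2\le i\le n$, exactly as prescribed, and substitute these values into the matrix $X$ of \eqref{mX}. Lemma~\ref{pappa copy(1)} then gives at once that
\[
\sigma(X)=\Bigl\{\sum_{i=1}^{n}x_{i},\;x_{1}-x_{2},\;x_{1}-x_{3},\;\ldots,\;x_{1}-x_{n}\Bigr\}.
\]
By construction $x_{1}-x_{i}=\lambda_{i}$ for $2\le i\le n$, so the last $n-1$ entries of this set are precisely $\lambda_{2},\ldots,\lambda_{n}$. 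It remains to check that $\sum_{i=1}^{n}x_{i}=\lambda_{1}$: one computes $\sum_{i=1}^{n}x_{i}=x_{1}+\sum_{i=2}^{n}(x_{1}-\lambda_{i})=n x_{1}-\sum_{i=2}^{n}\lambda_{i}=\sum_{i=1}^{n}\lambda_{i}-\sum_{i=2}^{n}\lambda_{i}=\lambda_{1}$, using the definition of $x_{1}$. Hence $\sigma(X)=\{\lambda_{1},\lambda_{2},\ldots,\lambda_{n}\}=\sigma$, so $X$ realizes $\sigma$ as a set of eigenvalues.

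The second task is to verify that $X$ is entrywise nonnegative, which is where the Sule\u{\i}manova hypotheses enter. The entries of $X$ are among $x_{1},\ldots,x_{n}$. For $i\ge 2$ we have $x_{i}=x_{1}-\lambda_{i}$; since $\lambda_{i}\le 0$ and $x_{1}=\frac{1}{n}s_{1}(\sigma)\ge 0$ by the definition of a Sule\u{\i}manova spectrum, it follows that $x_{i}=x_{1}+|\lambda_{i}|\ge 0$. Also $x_{1}\ge 0$ directly. Therefore every entry of $X$ is nonnegative, and combined with the spectral computation above this shows $X$ is a nonnegative (permutative) matrix realizing $\sigma$. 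For the ``in particular'' clause, if $s_{1}(\sigma)=\lambda_{1}+\cdots+\lambda_{n}=0$ then $x_{1}=0$ and $x_{i}=-\lambda_{i}=|\lambda_{i}|$ for $2\le i\le n$; substituting these into \eqref{mX} gives exactly the displayed matrix $X_{0}$, whose zero diagonal and off-diagonal entries $|\lambda_{i}|$ match the claim.

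I do not anticipate a genuine obstacle here: the theorem is essentially a bookkeeping corollary of Lemma~\ref{pappa copy(1)}. The only point requiring a little care is to confirm that the substitution $x_{i}=x_{1}-\lambda_{i}$ is consistent with the positional pattern of $X$ in \eqref{mX} — that is, that the $x_{1}$ on the diagonal and the $x_{i}$ in the appropriate off-diagonal slots produce precisely the matrix whose eigenvalues Lemma~\ref{pappa copy(1)} describes — and to make sure the Perron eigenvalue $\lambda_{1}$ is indeed the spectral radius (this follows because $\lambda_{1}=\sum x_{i}\ge x_{1}=\frac{1}{n}\sum x_{i}$ and all $\lambda_{i}$ with $i\ge2$ are nonpositive with $|\lambda_{i}|\le\lambda_{1}$, the latter from $s_{1}(\sigma)\ge0$). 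Everything else is a one-line verification.
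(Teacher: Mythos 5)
Your proposal is correct and follows essentially the expected route: the paper itself states this theorem as a cited result from \cite{PP} and gives no proof of its own, relying only on Lemma~\ref{pappa copy(1)}, and your argument (substitute the prescribed $x_i$ into the matrix of (\ref{mX}), read off the spectrum from that lemma, check $\sum_i x_i=\lambda_1$ and $x_1-x_i=\lambda_i$, and use $s_1(\sigma)\ge 0$ together with $\lambda_i\le 0$ for $i\ge 2$ to get nonnegativity, with the $s_1(\sigma)=0$ case giving $X_0$) is precisely the standard construction from that reference. No gap beyond the harmless remark that the lemma is stated for the set of eigenvalues, which you already flag.
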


\begin{remark} {\rm
\label{newR} By previous results and the proof of above Theorem \ref{pappa} in \cite{PP}, it is clear that for any set $\sigma=\left\{\alpha_{1},\ldots,\alpha_{n}\right\}$ there exists a permutative matrix with
the shape of $X$ in (\ref{mX}) whose set of eigenvalues is $\sigma.$}
\end{remark}

\noindent The following notions will be used in the sequel.

\begin{definition}
{\rm
\label{ept}
Let $\mathbf{\tau }=\left( \tau _{1},\ldots ,\tau _{n}\right) $ be an $n$-tuple whose elements are permutations in the symmetric group\ $S_{n}$, with $\tau _{1}=id$.\ Let $\mathbf{a=}\left( a_{1},\ldots ,a_{n}\right) \in
\mathbb{C}^{n}$. Define the row-vector,
\begin{equation*}
\tau _{j}\left( \mathbf{a}\right) =\left( a_{\tau _{j}\left( 1\right)
},\ldots ,a_{\tau _{j}\left( n\right) }\right)
\end{equation*}%
and consider the matrix
\begin{equation}
\tau \left( \mathbf{a}\right) =%
\begin{pmatrix}
\tau _{1}\left( \mathbf{a}\right)   \\
\tau _{2}\left( \mathbf{a}\right)  \\
\vdots \\
\tau _{n-1}\left( \mathbf{a}\right)  \\
\tau _{n}\left( \mathbf{a}\right)
\end{pmatrix}
.  \label{permut}
\end{equation}
An $n$-by-$n$ matrix $A$, is called $\mathbf{\tau}$\emph{-permutative} if
$A=\tau \left( \mathbf{a}\right) $ for some $n$-tuple $\mathbf{a}$.}
\end{definition}

\begin{remark} {\rm Although the statement in Definition \ref{def2} is precisely the statement found in  \cite[Definiton 2.1]{PP}, it is clear that Definition \ref{ept} of this work is the proper definition of a permutative matrix (indeed,
since every permutation matrix is a permutative matrix, it is not ideal to define the latter with the former). Thus, Definition \ref{ept} is a better definition of a permutative matrix than the one given at Definition \ref{def2}.
}
\end{remark}

\begin{definition}
{\rm
If $A$ and $B$ are $\mathbf{\tau }$-permutative by a common vector
$\mathbf{\tau }=\left( \tau _{1},\ldots ,\tau _{n}\right)$
then they are called \textit{permutatively equivalent}.}
\end{definition}

\begin{definition}
{\rm
Let $\varphi \in S_{n}$ and the $n$-tuple $\mathbf{\tau }=\left(
id,\varphi ,\varphi ^{2},\ldots ,\varphi ^{n-1}\right) \in \left(
S_{n}\right) ^{n}.$ Then a $\mathbf{\tau }$-permutative matrix is called $
\varphi $\textit{-permutative}.}
\end{definition}

It is clear from the definitions that two
$\varphi $-permutative matrices are permutatively equivalent matrices.

\begin{remark}
{\rm
If permutations are regarded as bijective
maps from the set $\left\{ 0,1,\cdots, n-1\right\}$ to itself, then a circulant (respectively, left circulant) matrix
is a $\varphi $-permutative matrix
where
$\varphi \left( i \right)\equiv i-1(\mbox{mod}\ {n})$ (resp. $\varphi \left( i \right)\equiv i+1(\mbox{mod}\ {n})$).
Indeed, notice that the $3$-by-$3$ circulant matrix
\begin{equation*}
\begin{pmatrix}
a & b & c \\
c & a & b \\
b & c & a
\end{pmatrix}
\end{equation*}
is $\varphi$- permutative with
$$\varphi =
\left ( \begin{array}{ccc}
    0 & 1 & 2  \\
    2 & 0 & 1
\end{array}
\right)
$$
and the $3$-by-$3$ left circulant matrix
\begin{equation*}
\begin{pmatrix}
a & b & c \\
b & c & a \\
c & a & b
\end{pmatrix}
\end{equation*} is $\varphi$- permutative with
$$\varphi =
\left ( \begin{array}{ccc}
    0 & 1 & 2  \\
    1 & 2 & 0
\end{array}
\right).
$$
}
\end{remark}

\begin{remark}
{\rm
\label{important2} A permutative matrix $A$ defines the class of permutatively equivalent matrices. Let $\sigma_{1},\sigma_{2}$ be two Sule\u{\i}manova
spectra, then the corresponding realizing matrices $X_{\sigma_{1}}$ and $X_{\sigma_{2}}$
given by Theorem \ref{pappa} are permutatively equivalent matrices. Furthermore,  by Lemma \ref{pappa copy(1)} and Remark \ref{newR} it is easy to check that given two arbitrary inverse eigenvalue problems (not necessarily NIEP) there exist a solution which is permutatively equivalent to the matrix $X$ in (\ref{mX}).}
\end{remark}

For $\mathbf{\tau}$-permutative matrices an analogous property related with circulant
matrices is given below.

\begin{proposition}
\label{cristi}
Let $\left\{ A_{i}\right\} _{i=1}^{k}$ be a family of permutatively equivalent matrices in $\mathbb{C}^{n\times n}$.
Let $\left\{ \gamma _{i}\right\} _{i=1}^{k}$ be a set of complex numbers. Consider
\begin{equation}
A=\sum\limits_{i=1}^{k}\gamma _{i}A_{i}.
\end{equation}
Then $A_{1}$ and $A$ are permutatively equivalent matrices.
\end{proposition}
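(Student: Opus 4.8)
The plan is to reduce the whole statement to one elementary observation: for a \emph{fixed} tuple of permutations $\mathbf{\tau}=\left(\tau_{1},\ldots,\tau_{n}\right)$ with $\tau_{1}=id$, the assignment $\mathbf{a}\mapsto\tau\left(\mathbf{a}\right)$ introduced in (\ref{permut}) is a \emph{linear} map from $\mathbb{C}^{n}$ into $\mathbb{C}^{n\times n}$. Indeed, the $(j,\ell)$ entry of $\tau\left(\mathbf{a}\right)$ is the coordinate $a_{\tau_{j}\left(\ell\right)}$, which is a coordinate projection and hence linear in $\mathbf{a}$. So linear combinations of $\mathbf{\tau}$-permutative matrices built on a common $\mathbf{\tau}$ are again $\mathbf{\tau}$-permutative on that same $\mathbf{\tau}$, and the proposition is just this fact phrased in the language of permutative equivalence.

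First I would invoke the hypothesis that $A_{1},\ldots,A_{k}$ are permutatively equivalent to fix a single tuple $\mathbf{\tau}=\left(\tau_{1},\ldots,\tau_{n}\right)$, $\tau_{1}=id$, together with vectors $\mathbf{a}^{(1)},\ldots,\mathbf{a}^{(k)}\in\mathbb{C}^{n}$ such that $A_{i}=\tau\!\left(\mathbf{a}^{(i)}\right)$ for every $i$; since $\tau_{1}=id$, the vector $\mathbf{a}^{(i)}$ is simply the first row of $A_{i}$. Then I would set $\mathbf{b}=\sum_{i=1}^{k}\gamma_{i}\,\mathbf{a}^{(i)}\in\mathbb{C}^{n}$ and use the linearity noted above to conclude
\[
A=\sum_{i=1}^{k}\gamma_{i}A_{i}=\sum_{i=1}^{k}\gamma_{i}\,\tau\!\left(\mathbf{a}^{(i)}\right)=\tau\!\left(\mathbf{b}\right),
\]
which can also be verified entrywise: $A_{j\ell}=\sum_{i}\gamma_{i}\left(A_{i}\right)_{j\ell}=\sum_{i}\gamma_{i}a^{(i)}_{\tau_{j}\left(\ell\right)}=b_{\tau_{j}\left(\ell\right)}$. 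Hence $A$ is $\mathbf{\tau}$-permutative for the \emph{same} tuple $\mathbf{\tau}$ witnessing $A_{1}=\tau\!\left(\mathbf{a}^{(1)}\right)$, so $A_{1}$ and $A$ are permutatively equivalent.

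I do not expect a genuine obstacle; the proof is essentially bookkeeping. The one point that must be handled carefully is the interpretation of ``permutatively equivalent'': it requires a \emph{common} $\mathbf{\tau}$ (with $\tau_{1}=id$), and this tuple has to be held fixed across all $A_{i}$ so that the single vector $\mathbf{b}$ realizes every row of $A$ simultaneously. It is worth recording, as this is the analogy announced just before the proposition, that the statement is precisely the $\mathbf{\tau}$-permutative counterpart of the classical fact that a linear combination of circulant (respectively, left circulant) matrices is again circulant (respectively, left circulant) of the same type.
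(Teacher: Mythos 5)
Your proof is correct and follows essentially the same route as the paper: both rest on the linearity of the map $\mathbf{a}\mapsto\tau\left(\mathbf{a}\right)$ for a fixed common tuple $\mathbf{\tau}$, which the paper expresses through the decomposition $\tau\left(\mathbf{a}\right)=a_{1}\tau\left(\mathbf{e}_{1}\right)+\cdots+a_{n}\tau\left(\mathbf{e}_{n}\right)$ and you verify entrywise. No gaps; the point you flag about holding one common $\mathbf{\tau}$ fixed across all $A_{i}$ is exactly the hypothesis the paper uses as well.
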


\begin{proof}
Let $\mathbf{\tau }=\left( \tau _{1},\ldots ,\tau _{n}\right) $ be an $n$-tuple whose elements are permutations in the symmetric group $S_{n}$
and suppose that the family $\left \{ A_{i}\right\}$ are permutatively equivalent by $\tau$.
Let $\mathbf{e}_{1},\mathbf{e}_{2},\ldots ,\mathbf{e}_{n}$ be the canonical
row vectors in $\mathbb{C}^{n}$. The result is an immediate consequence of the
fact that for any $\mathbf{a=}\left( a_{1},\ldots ,a_{n}\right)\in \mathbb{C}
^{n}$ the matrix $\tau \left( \mathbf{a}\right)$ in (\ref{permut}) can be
decomposed as
\begin{equation*}
\tau \left( \mathbf{a}\right) =a_{1}\tau \left( \mathbf{e}_{1}\right)
+a_{2}\tau \left( \mathbf{e}_{2}\right) +\cdots +a_{n}\tau \left( \mathbf{e}%
_{n}\right),
\end{equation*}
where
\begin{equation*}
\tau \left( \mathbf{e}_{j}\right) =%
\begin{pmatrix}
\mathbf{e}_{j} \\
\tau _{2}\left( \mathbf{e}_{j}\right)   \\
\vdots \\
\tau _{n-1}\left( \mathbf{e}_{j}\right)   \\
\tau _{n}\left( \mathbf{e}_{j}\right) %
\end{pmatrix}.
\end{equation*}
\end{proof}

\section{Eigenpairs for some into block matrices}

In this section we exhibit spectral results for matrices that are partitioned into $2$-by-$2$
symmetric blocks and we apply the results to NIEP, SNIEP and PNIEP. The next theorem is valid in an algebraic closed field $K$ of characteristic $0$. For instance, $K=\mathbb{C}$.

\begin{theorem}
\label{main}
Let $K$ be an algebraically closed field of characteristic $0$ and suppose
that $A=\left( A_{ij}\right) $ is a block matrix of order $2n$, where
\begin{equation}
A_{ij}=%
\begin{pmatrix}
a_{ij} & b_{ij} \\
b_{ij} & a_{ij}
\end{pmatrix}
\text{,\ }a_{ij}\text{,\ }b_{ij}\in K.
\end{equation}
If
\[
s_{ij}=a_{ij}+b_{ij},\ 1\leq i,j\leq n
\]%
and%
\[
c_{ij}=a_{ij}-b_{ij},\ 1\leq i,j\leq n
\]%
Then
\[
\sigma \left( A\right) =\sigma \left( S\right) \cup \sigma \left( C\right)
\]%
where
\[
S=\left( s_{ij}\right) \text{ and }C=\left( c_{ij}\right) .
\]
\end{theorem}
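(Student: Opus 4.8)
The plan is to diagonalize the $2$-by-$2$ block structure simultaneously by a single orthogonal-type change of basis that does not depend on the block indices. Each block $A_{ij}$ is symmetric of the form $\begin{pmatrix} a_{ij} & b_{ij} \\ b_{ij} & a_{ij}\end{pmatrix}$, hence all blocks are simultaneously diagonalized by the fixed matrix $Q=\tfrac{1}{\sqrt{2}}\begin{pmatrix} 1 & 1 \\ 1 & -1\end{pmatrix}$ (valid since $\operatorname{char} K = 0$ and $K$ is algebraically closed, so $\sqrt{2}$ exists; alternatively one may use $\begin{pmatrix} 1 & 1 \\ 1 & -1\end{pmatrix}$ without normalization and track the scalar, which cancels). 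Indeed $Q^{-1} A_{ij} Q = \begin{pmatrix} s_{ij} & 0 \\ 0 & c_{ij}\end{pmatrix}$ with $s_{ij}=a_{ij}+b_{ij}$ and $c_{ij}=a_{ij}-b_{ij}$, because $(1,1)$ is an eigenvector with eigenvalue $a_{ij}+b_{ij}$ and $(1,-1)$ is an eigenvector with eigenvalue $a_{ij}-b_{ij}$.

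First I would form the block-diagonal matrix $\widehat{Q} = I_n \otimes Q$ of order $2n$ and consider the similarity $\widehat{Q}^{-1} A \widehat{Q}$. Since $\widehat{Q}$ acts blockwise, the $(i,j)$ block of $\widehat{Q}^{-1} A \widehat{Q}$ is exactly $Q^{-1} A_{ij} Q = \operatorname{diag}(s_{ij}, c_{ij})$. Next I would apply the permutation of $\{1,\dots,2n\}$ that gathers all the "first coordinates" of the blocks together and all the "second coordinates" together — that is, conjugation by the permutation matrix $P$ sending the ordered basis $(e_1^{(1)}, e_2^{(1)}, e_1^{(2)}, e_2^{(2)}, \dots)$ to $(e_1^{(1)}, e_1^{(2)}, \dots, e_1^{(n)}, e_2^{(1)}, \dots, e_2^{(n)})$. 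Under this reordering, $P^{-1}\,\widehat{Q}^{-1} A \widehat{Q}\, P$ becomes block-diagonal with two $n$-by-$n$ blocks, the first being $S=(s_{ij})$ and the second being $C=(c_{ij})$.

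Having exhibited an explicit similarity $A \sim S \oplus C$, the conclusion $\sigma(A) = \sigma(S) \cup \sigma(C)$ follows immediately, since similar matrices have the same spectrum and the spectrum of a block-diagonal matrix is the union of the spectra of the diagonal blocks. I would state this as the final line, noting that the identity is in fact one of characteristic polynomials, $\det(xI_{2n}-A) = \det(xI_n - S)\det(xI_n - C)$, so multiplicities add as well.

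The only genuine subtlety — hardly an obstacle — is bookkeeping: one must check that $\widehat{Q}$ really is invertible over $K$ (it is, since $\det Q \neq 0$ in characteristic $0$) and that the index permutation is applied consistently on rows and columns so that the conjugation, not merely a one-sided multiplication, is what produces the clean $S \oplus C$ form. I expect the bulk of the write-up to be making the two changes of basis precise and verifying the $(i,j)$-block computation $Q^{-1}A_{ij}Q = \operatorname{diag}(s_{ij},c_{ij})$; everything after that is formal.
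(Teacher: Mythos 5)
Your proposal is correct, but it is a genuinely different argument from the one in the paper. The paper proceeds by explicit eigenvector transplantation: from an eigenpair $(\lambda,v)$ of $S$ it builds the block vector with blocks $v_j\mathbf{e}$, $\mathbf{e}=(1,1)^T$, and from an eigenpair $(\mu,x)$ of $C$ the block vector with blocks $x_j\mathbf{f}$, $\mathbf{f}=(1,-1)^T$, verifies these are eigenvectors of $A$, and then proves that the $2n$ vectors so obtained are linearly independent by a determinant/row-reduction computation; note this final step tacitly assumes that $S$ and $C$ admit bases of eigenvectors, i.e.\ are diagonalizable. Your route instead conjugates $A$ by $\widehat{Q}=I_n\otimes Q$ with $Q=\frac{1}{\sqrt 2}\begin{pmatrix}1&1\\1&-1\end{pmatrix}$ and then by a perfect-shuffle permutation to exhibit an explicit similarity $A\sim S\oplus C$. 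That is cleaner in two respects: it yields $\det(xI_{2n}-A)=\det(xI_n-S)\det(xI_n-C)$, so the spectra match with multiplicities and no diagonalizability hypothesis is needed, and the only verifications are the $2$-by-$2$ identity $Q^{-1}A_{ij}Q=\operatorname{diag}(s_{ij},c_{ij})$ (using $2\neq 0$ in $K$) plus consistent two-sided application of the permutation. What the paper's approach buys in exchange is the explicit form of the eigenvectors of $A$, which the authors reuse almost verbatim in the odd-order case (Theorem \ref{main copy(1)}), where the bordered structure makes the block-similarity bookkeeping slightly less immediate; your argument also adapts to that case, but would need the extra $1$-dimensional coordinate carried through the change of basis.
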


\begin{proof}
Let $\left( \lambda ,v\right) $ be an eigenpair of $S$,\ with $v=\left(
v_{1},\ldots ,v_{n}\right) ^{T}$,  and consider the $2n$-by-$1$ block vector
$w=\left( w_{j}\right) $, where $w_{j}:=v_{j}\mathbf{e}$ and $\mathbf{e=}%
\left( 1,1\right) ^{T}$. Since
\[
A_{ij}w_{j}=%
\begin{pmatrix}
a_{ij} & b_{ij} \\
b_{ij} & a_{ij}%
\end{pmatrix}%
v_{j}\mathbf{e=}%
\begin{pmatrix}
a_{ij}+b_{ij} \\
b_{ij}+a_{ij}%
\end{pmatrix}%
v_{j}=s_{ij}v_{j}\mathbf{e}\text{,}
\]%
notice that, for every $i=1,\ldots ,n,$%
\[
\sum\limits_{j=1}^{n}A_{ij}w_{j}=\left(
\sum\limits_{j=1}^{n}s_{ij}v_{j}\right) \mathbf{e=}\left( \lambda
v_{i}\right) \mathbf{e=}\lambda \left( v_{i}\mathbf{e}\right) =\lambda w_{i}
\]%
i.e $\left( \lambda ,w\right) $ be an eigenpair of $A$.\ Thus $\sigma \left(
S\right) \subseteq \sigma \left( A\right) $.

Similarly, let $\left( \mu ,x\right) $ be an eigenpair of $C,$ with $%
x=\left( x_{1},\ldots ,x_{n}\right) ^{T}$ and consider the $2n$-by-$1$ block
vector $y=\left( y_{j}\right) $, where $y_{j}:=x_{j}\mathbf{f}$ and $\mathbf{%
f=}\left( 1,-1\right) ^{T}$. Since
\[
A_{ij}y_{j}=%
\begin{pmatrix}
a_{ij} & b_{ij} \\
b_{ij} & a_{ij}%
\end{pmatrix}%
x_{j}\mathbf{f=}%
\begin{pmatrix}
a_{ij}-b_{ij} \\
b_{ij}-a_{ij}%
\end{pmatrix}%
x_{j}=c_{ij}x_{j}\mathbf{f}\text{,}
\]%
notice that, for every $i=1,\ldots ,n,$%
\[
\sum\limits_{j=1}^{n}A_{ij}y_{j}=\left(
\sum\limits_{j=1}^{n}c_{ij}x_{j}\right) \mathbf{f=}\left( \lambda
x_{i}\right) \mathbf{f=}\lambda \left( x_{i}\mathbf{f}\right) =\lambda y_{i}
\]%
i.e $\left( \mu ,y\right) $ is also an eigenpair of $A$.\ Thus $\sigma \left(
C\right) \subseteq \sigma \left( A\right) $.
Suppose that
$$\Theta _{s}=\left\{ \left( x_{1i},x_{2i},\ldots ,x_{ni}\right) ^{T}:i=1,\ldots ,n\right\} $$
and
$$\Theta _{c}=\left\{ \left( y_{1i},y_{2i},\ldots , y_{ni}\right) ^{T}:i=1,\ldots ,n\right\}$$
are bases formed with eigenvectors of $S$ and $C$, respectively.
The result will follow after proving the linear independence of the set $\Upsilon= \Upsilon_1 \cup \Upsilon_2$, where:
\begin{equation*}
\Upsilon_1 =\left\{ \left( x_{1}\mathbf{e}_{2}^{T}\mathbf{,}x_{2}\mathbf{e}
_{2}^{T}\mathbf{,\ldots ,}x_{n}\mathbf{e}_{2}^{T}\right) ^{T}:\left( x_{1}
\mathbf{,}x_{2}\mathbf{,\ldots ,}x_{n}\right) ^{T}\in \text{ }\Theta
_{s}\right\}
\end{equation*}
and
\begin{equation*}
\Upsilon_2 =\left\{ \left( y_{1}\mathbf{f}_{2}^{T}\mathbf{,}y_{2}
\mathbf{f}_{2}^{T}\mathbf{,\ldots ,}y_{n}\mathbf{f}_{2}^{T}\right)
^{T}:\left(y_{1}\mathbf{,}y_{2}\mathbf{,\ldots ,}y_{n}\right)^{T}
\text{ }\in \text{ }\Theta _{c}\right\}.
\end{equation*}
Therefore, we consider the following determinant,
\begin{equation*}
d=%
\begin{vmatrix}
y_{11} & \ldots & y_{1n} & x_{11} & \ldots & x_{1n} \\
-y_{11} & \ldots & -y_{1n} & x_{11} & \ldots & x_{1n} \\
\vdots & \ddots & \vdots & \vdots & \ddots & \vdots \\
\vdots & \ddots & \vdots & \vdots & \ddots & \vdots \\
y_{n1} & \ldots & y_{nn} & x_{n1} & \ldots & x_{nn} \\
-y_{n1} & \ldots & -y_{nn} & x_{n1} & \ldots & x_{nn}%
\end{vmatrix}
.
\end{equation*}
Note that $d$ stands for the determinant of a $2n$-by-$2n$ matrix obtained from the coordinates of the vectors
in $\Upsilon.$ By adding rows and after making suitable row permutations
we conclude that the absolute value of $d$ coincides with the absolute value
of the following determinant
\begin{equation*}
\begin{vmatrix}
y_{11} & \ldots & y_{1n} & x_{11} & \ldots & x_{1n} \\
\vdots & \ddots & \vdots & \vdots & \ddots & \vdots \\
y_{n1} & \ldots & y_{nn} & x_{n1} & \ldots & x_{nn} \\
0 & \ldots & 0 & 2x_{11} & \ldots & 2x_{1n} \\
0 & \ldots & 0 & \vdots & \ldots & \vdots \\
0 & \ldots & 0 & 2x_{n1} & \ldots & 2x_{nn}%
\end{vmatrix}%
\end{equation*}
which is nonzero by the linear independence of the sets $\Theta _{s}$ and
$\Theta _{c}$ respectively.
\end{proof}

\begin{theorem}
\label{main2}Let $S=\left(  s_{ij}\right)  $ and $C=\left( c_{ij}\right)  $
be matrices of order $n$ whose spectra (counted with their
multiplicities) are $\sigma (S)=\left(\lambda_{1},\lambda_{2},\ldots
,\lambda_{n}\right)  \ $and$\ \sigma (C)=\left(\mu_{1},\mu_{2},\ldots
,\mu_{n}\right)  $, respectively. Let $0\leq \gamma \leq 1$. If
\begin{equation}
\left\vert c_{ij}\right\vert \leq s_{ij}, \, 1\leq i,j\leq
n,\label{mayorize}
\end{equation}
(or equivalently if $S$, $S+C$ and $S-C$ are nonnegative matrices), then the matrices $\frac{1}{2}\left(  S+\gamma C\right)$ and $\frac{1}{2}\left(
S-\gamma C\right)$ are nonnegative and the nonnegative matrices
\begin{equation}
M_{\pm\gamma}=\left( M_{{ij}_{\pm\gamma}}\right)  ,\text{with \,} M_{{ij}_{\pm\gamma}}=
\begin{pmatrix}
\frac{s_{ij} \pm\gamma c_{ij}}{2} & \frac{s_{ij}\mp\gamma c_{ij}}{2}\\
\frac{s_{ij}\mp\gamma c_{ij}}{2} & \frac{s_{ij}\pm\gamma c_{ij}}{2}%
\end{pmatrix}
,\, 1\leq i,j\leq n\label{byblock}%
\end{equation}
realize, respectively, the following lists
\[
\sigma(S)\cup\gamma \sigma(C):=\left(  \lambda_{1},\lambda_{2},\ldots,\lambda_{n},\gamma\mu
_{1},\gamma \mu_{2},\ldots, \gamma \mu_{n}\right)
\] and
\[
\sigma(S)\cup\left(-\gamma\sigma(C)\right):=\left(  \lambda_{1},\lambda_{2},\ldots,\lambda_{n},-\gamma\mu
_{1},-\gamma\mu_{2},\ldots,-\gamma\mu_{n}\right).
\]

\end{theorem}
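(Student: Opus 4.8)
The plan is to derive both conclusions directly from Theorem~\ref{main} after an elementary nonnegativity check; there is essentially no hard step here, and the only care needed is in the sign bookkeeping. Note first that ``$M_{\pm\gamma}$ realizes the list $\sigma(S)\cup\gamma\sigma(C)$ (resp. $\sigma(S)\cup(-\gamma\sigma(C))$)'' unpacks into two separate claims: (i) $M_{\pm\gamma}$ --- and the auxiliary matrices $\tfrac12(S\pm\gamma C)$ --- are entrywise nonnegative; and (ii) $\sigma(M_{\pm\gamma})$ is the asserted $2n$-tuple. I would prove these in turn.

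For (i): since $0\le\gamma\le1$, for all $i,j$ we have $|\gamma c_{ij}|=\gamma|c_{ij}|\le|c_{ij}|\le s_{ij}$ by~(\ref{mayorize}), hence $s_{ij}\pm\gamma c_{ij}\ge s_{ij}-|\gamma c_{ij}|\ge 0$. Thus every entry $\tfrac{s_{ij}\pm\gamma c_{ij}}{2}$ of $\tfrac12(S\pm\gamma C)$, equivalently every entry of each block $M_{{ij}_{\pm\gamma}}$ in~(\ref{byblock}), is nonnegative, so $M_{\pm\gamma}\ge 0$. The parenthetical reformulation of~(\ref{mayorize}) is the trivial remark that $|c_{ij}|\le s_{ij}$ for all $i,j$ is equivalent to $s_{ij}\ge 0$, $s_{ij}+c_{ij}\ge 0$ and $s_{ij}-c_{ij}\ge 0$ for all $i,j$, i.e. to $S$, $S+C$ and $S-C$ being nonnegative.

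For (ii): I would recognise $M_{+\gamma}$ as an instance of the block matrix $A=(A_{ij})$ of Theorem~\ref{main}, with $a_{ij}=\tfrac{s_{ij}+\gamma c_{ij}}{2}$ and $b_{ij}=\tfrac{s_{ij}-\gamma c_{ij}}{2}$. Then $a_{ij}+b_{ij}=s_{ij}$ and $a_{ij}-b_{ij}=\gamma c_{ij}$, so the two ``reduced'' matrices produced by Theorem~\ref{main} are exactly $S$ and $\gamma C$, whence $\sigma(M_{+\gamma})=\sigma(S)\cup\sigma(\gamma C)$. Because $Cv=\mu v$ forces $(\gamma C)v=(\gamma\mu)v$, the characteristic polynomial of $\gamma C$ has roots $\gamma\mu_1,\ldots,\gamma\mu_n$ counted with multiplicity, so $\sigma(\gamma C)=(\gamma\mu_1,\ldots,\gamma\mu_n)$ and $\sigma(M_{+\gamma})=(\lambda_1,\ldots,\lambda_n,\gamma\mu_1,\ldots,\gamma\mu_n)$, as required. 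For $M_{-\gamma}$ the roles of the diagonal and off-diagonal entries of each block are swapped: $a_{ij}=\tfrac{s_{ij}-\gamma c_{ij}}{2}$, $b_{ij}=\tfrac{s_{ij}+\gamma c_{ij}}{2}$, so $a_{ij}+b_{ij}=s_{ij}$ and $a_{ij}-b_{ij}=-\gamma c_{ij}$, and Theorem~\ref{main} gives $\sigma(M_{-\gamma})=\sigma(S)\cup\sigma(-\gamma C)=\sigma(S)\cup(-\gamma\sigma(C))$.

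The only genuine obstacle is bookkeeping rather than mathematics: one must match the $\pm$/$\mp$ pattern in~(\ref{byblock}) correctly (so that the diagonal block of $M_{+\gamma}$ carries $\tfrac{s_{ij}+\gamma c_{ij}}{2}$ and the off-diagonal $\tfrac{s_{ij}-\gamma c_{ij}}{2}$, and conversely for $M_{-\gamma}$), apply Theorem~\ref{main} over $K=\mathbb{C}$ even though all the entries involved are real, and track multiplicities when passing from $\sigma(C)$ to $\pm\gamma\sigma(C)$. Once the identification with Theorem~\ref{main} is set up, no further computation is needed.
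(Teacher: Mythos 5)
Your proposal is correct and follows essentially the same route as the paper: verify entrywise nonnegativity from $|c_{ij}|\le s_{ij}$ and $0\le\gamma\le1$, then apply Theorem~\ref{main} with the identification $a_{ij}+b_{ij}=s_{ij}$, $a_{ij}-b_{ij}=\pm\gamma c_{ij}$ so the reduced matrices are $S$ and $\pm\gamma C$. Your explicit remarks about the equivalence with nonnegativity of $S$, $S\pm C$ and about multiplicities of $\sigma(\pm\gamma C)$ are just slightly more detailed bookkeeping than the paper's own argument.
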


\begin{proof}
By the definitions of $\frac{1}{2}\left(  S+\gamma C\right)  \ $and$\ \frac{1}
{2}\left(  S-\gamma C\right)  \ $ and the condition in (\ref{mayorize})\ it is clear
that $M_{\pm\gamma}$ in (\ref{byblock}) are nonnegative matrices. By conditions
of Theorem \ref{main} one can see that each $\left(
i,j\right)  $-block of the matrix takes the form
$
\begin{pmatrix}
x_{ij} & y_{ij}\\
y_{ij} & x_{ij}%
\end{pmatrix}
$ and its spectrum is partitioned into the union of the spectra of the $n$-by-$n$ matrices
$\left(  x_{ij}+y_{ij}\right)  _{i,j=1}^{n}$ and $\left(  x_{ij}%
-y_{ij}\right)  _{i,j=1}^{n}$. If we impose that $S=\left(  x_{ij}%
+y_{ij}\right)  _{i,j=1}^{n}$ and $\pm\gamma C=\left(  x_{ij}-y_{ij}\right)
_{i,j=1}^{n}$ we obtain $s_{ij}=x_{ij}+y_{ij}$ and
$\pm\gamma c_{ij} =x_{ij}-y_{ij}.$ Thus, for both cases $x_{ij}=\frac{s_{ij} \pm \gamma c_{ij}}{2}$ and $y_{ij}
=\frac{s_{ij} \mp \gamma c_{ij}}{2}$,\ as it is required for the respective realization of the spectra
$\sigma(S) \cup \pm \gamma\sigma(C).$
\end{proof}

\begin{remark}
{\rm
\label{important}Note that in the previous result if $S=\left( s_{ij}\right) $
and $C=\left( c_{ij}\right) $, then
\begin{equation}
M_{\pm\gamma}=
\begin{pmatrix}
\frac{s_{11} \pm \gamma c_{11}}{2} & \frac{s_{11} \mp\gamma c_{11}}{2} & \ldots & \ldots &
\frac{s_{1n} \pm \gamma c_{1n}}{2} & \frac{s_{1n} \mp\gamma c_{1n}}{2} \\
\frac{s_{11} \mp\gamma c_{11}}{2} & \frac{s_{11} \pm \gamma c_{11}}{2} & \ldots & \ldots &
\frac{s_{1n}\mp\gamma c_{1n}}{2} & \frac{s_{1n}\pm \gamma c_{1n}}{2} \\
\vdots & \vdots & \ddots & \ddots & \vdots & \vdots \\
\vdots & \vdots & \ddots & \ddots & \vdots & \vdots \\
\frac{s_{n1} \pm \gamma c_{n1}}{2} & \frac{s_{n1}\mp \gamma c_{n1}}{2} & \ldots & \ldots &
\frac{s_{nn} \pm \gamma c_{nn}}{2} & \frac{s_{nn} \mp\gamma c_{nn}}{2} \\
\frac{s_{n1} \mp\gamma c_{n1}}{2} & \frac{s_{n1} \pm \gamma c_{n1}}{2} & \ldots & \ldots &
\frac{s_{nn}\mp\gamma c_{nn}}{2} & \frac{s_{nn}\pm \gamma c_{nn}}{2}%
\end{pmatrix}%
.  \label{aspectM}
\end{equation}
}
\end{remark}

The next corollary establishes the result when the matrices $S$ and $C$ are symmetric, both with prescribed list of eigenvalues.

\begin{corollary}
\label{syper}Let $S=\left(  s_{ij}\right)  $ and $C=\left(  c_{ij}\right)  $
be symmetric  matrices of orders $n$ whose spectra (counted with their
multiplicities) are $\sigma(S)=\left(  \lambda_{1},\lambda_{2},\ldots
,\lambda_{n}\right)  $ and $\sigma(C)=\left(  \mu_{1},\mu_{2},\ldots,\mu
_{n}\right)  $, respectively. Let $0\leq \gamma \leq 1$. Moreover, suppose that $\left\vert
c_{ij}\right\vert \leq s_{ij}$ for all $1\leq i,j\leq n.$ Then
$\frac{1}{2}\left(  S+\gamma C\right)  $ and $\frac{1}{2}\left(  S- \gamma C\right)  \ $ are
symmetric nonnegative matrices and
\[
M_{\pm \gamma }=\left(  M_{{ij}_{\pm \gamma }}\right) \text{ with }M_{{ij}_{\pm \gamma }}=%
\begin{pmatrix}
\frac{s_{ij}\pm \gamma  c_{ij}}{2} & \frac{s_{ij}\mp \gamma  c_{ij}}{2}\\
\frac{s_{ij}\mp \gamma  c_{ij}}{2} & \frac{s_{ij}\pm \gamma  c_{ij}}{2}%
\end{pmatrix},
\ \text{for} \ 1\leq i,j\leq n
\]
are symmetric nonnegative matrices such that, respectively, realize the following lists
\[
\sigma(S)\cup \left(\pm \gamma \sigma(C)\right)=\left(  \lambda_{1},\lambda_{2},\ldots,\lambda
_{n},\pm \gamma \mu_{1},\pm \gamma  \mu_{2},\ldots, \pm \gamma \mu_{n}\right).
\]

\end{corollary}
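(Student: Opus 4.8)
The plan is to deduce Corollary~\ref{syper} from Theorem~\ref{main2} by observing that symmetry of $S$ and $C$ is inherited by the matrices $M_{\pm\gamma}$, so almost everything is already done. First I would note that the hypotheses of Theorem~\ref{main2} are met verbatim: $S=(s_{ij})$ and $C=(c_{ij})$ have the prescribed spectra $\sigma(S)$ and $\sigma(C)$, the scalar $\gamma$ satisfies $0\le\gamma\le 1$, and the majorization condition $|c_{ij}|\le s_{ij}$ for all $1\le i,j\le n$ is assumed. Hence by Theorem~\ref{main2} the matrices $\frac{1}{2}(S+\gamma C)$ and $\frac{1}{2}(S-\gamma C)$ are nonnegative, and the block matrices $M_{\pm\gamma}=(M_{ij_{\pm\gamma}})$ of order $2n$ defined in (\ref{byblock}) are nonnegative and realize the lists $\sigma(S)\cup\gamma\sigma(C)$ and $\sigma(S)\cup(-\gamma\sigma(C))$, respectively; these are exactly the lists $\sigma(S)\cup(\pm\gamma\sigma(C))=(\lambda_1,\ldots,\lambda_n,\pm\gamma\mu_1,\ldots,\pm\gamma\mu_n)$ appearing in the statement.

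Next I would verify the only additional claim, namely symmetry. Since $S$ and $C$ are symmetric, $s_{ij}=s_{ji}$ and $c_{ij}=c_{ji}$, hence $\frac{1}{2}(s_{ij}\pm\gamma c_{ij})=\frac{1}{2}(s_{ji}\pm\gamma c_{ji})$ and likewise for the $\mp$ combination; therefore $\frac{1}{2}(S\pm\gamma C)$ is symmetric. For the $2n$-by-$2n$ matrix $M_{\pm\gamma}$ I would use the explicit form displayed in Remark~\ref{important} (equation (\ref{aspectM})): each diagonal block $M_{ii_{\pm\gamma}}$ is a symmetric $2$-by-$2$ matrix by construction (the off-diagonal entries are equal), and for $i\ne j$ the block in position $(i,j)$ is $M_{ij_{\pm\gamma}}=\begin{pmatrix}\frac{s_{ij}\pm\gamma c_{ij}}{2} & \frac{s_{ij}\mp\gamma c_{ij}}{2}\\[2pt]\frac{s_{ij}\mp\gamma c_{ij}}{2} & \frac{s_{ij}\pm\gamma c_{ij}}{2}\end{pmatrix}$, whose transpose equals the block $M_{ji_{\pm\gamma}}$ precisely because $s_{ij}=s_{ji}$ and $c_{ij}=c_{ji}$ force $M_{ij_{\pm\gamma}}=M_{ji_{\pm\gamma}}$, and each such block is itself symmetric. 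Thus $M_{\pm\gamma}^{T}=M_{\pm\gamma}$, so the realizing matrices are symmetric nonnegative.

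There is essentially no obstacle here: the whole content of the corollary beyond Theorem~\ref{main2} is the bookkeeping observation that a block matrix with symmetric blocks that is block-symmetric is symmetric, together with the trivial remark that the $2$-by-$2$ blocks produced in (\ref{byblock}) are always symmetric regardless of whether $S,C$ are. The one point deserving a sentence of care is that symmetry of $S$ and $C$ must be used twice — once to get symmetry of each individual block $M_{ij_{\pm\gamma}}$ is automatic, but to get symmetry of the block arrangement (that $M_{ij_{\pm\gamma}}=M_{ji_{\pm\gamma}}$) one genuinely needs $s_{ij}=s_{ji}$ and $c_{ij}=c_{ji}$. With that remark the proof is complete.
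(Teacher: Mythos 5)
Your proposal is correct and follows the same route as the paper: the paper's proof simply states that the corollary is an immediate consequence of Theorem~\ref{main2} together with the observation that symmetry of $S$ and $C$ makes the matrices $M_{\pm\gamma}$ in (\ref{aspectM}) symmetric. You merely spell out the block-level check ($M_{{ij}_{\pm\gamma}}$ symmetric and $M_{{ij}_{\pm\gamma}}=M_{{ji}_{\pm\gamma}}$), which the paper leaves implicit.
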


\begin{proof}
It is an immediate consequence of Theorem \ref{main2} that if the matrices $S$ and $C$  are symmetric, then the matrices $M_{\pm \gamma}$ obtained in
(\ref{aspectM}) are also symmetric.
\end{proof}

\begin{remark}
{\rm
\label{cond}We remark that for two permutatively equivalent $n$-by-$n$ matrices $S=\left(
s_{ij}\right)  $ and $C=\left(  c_{ij}\right)  $ whose first row,
are the an $n$-tuple $\left(  s_{1},\ldots,s_{n}\right),  $ and
$\left(  c_{1},\ldots,c_{n}\right)  ,$ respectively, the inequalities $\left\vert
c_{ij}\right\vert \leq s_{ij}$ hold if and only if $\left\vert
c_{i}\right\vert \leq s_{i}, 1\leq i\leq n.$
}
\end{remark}

\begin{theorem}
\label{main3}
Let $S=\left(  s_{ij}\right)  $ and $C=\left(  c_{ij}\right)  $ be permutatively equivalent matrices whose first row are the $n$-tuples
$\left(  s_{1},\ldots,s_{n}\right)  $ and $\left(  c_{1},\ldots,c_{n}\right),
$ respectively, such that $\left\vert c_{i}\right\vert \leq s_{i},1\leq i\leq n.$
Moreover, their spectra (counted with their multiplicities) are the lists
$\sigma(S)=\left(  \lambda_{1},\lambda_{2},\ldots,\lambda_{n}\right)  $ and
$\sigma(C)=\left(  \mu_{1},\mu_{2},\ldots,\mu_{n}\right)  $, respectively. Let $0\leq \gamma \leq 1$.
Then,  $\frac{1}{2}\left(  S+ \gamma C\right)  $ and $\frac{1}{2}\left(
S-\gamma C\right)  \ $ are nonnegative matrices, permutatively equivalent matrices and the
following matrices:

\begin{equation}
M_{\pm \gamma}=\left(  M_{{ij}_{\pm \gamma}}\right) \text{ with }M_{{ij}_{\pm \gamma}}=
\begin{pmatrix}
\frac{s_{ij}\pm \gamma c_{ij}}{2} & \frac{s_{ij}\mp \gamma c_{ij}}{2}\\
\frac{s_{ij}\mp \gamma c_{ij}}{2} & \frac{s_{ij}\pm \gamma c_{ij}}{2}%
\end{pmatrix},
\ 1\leq i,j\leq n\label{realizingM3}
\end{equation}
are permutative and realize, respectively, the following lists
\[
\sigma(S)\cup \left(\pm \gamma\sigma(C)\right)=\left(  \lambda_{1},\lambda_{2},\ldots,\lambda
_{n},\pm\gamma \mu_{1},\pm \gamma \mu_{2},\ldots, \pm \gamma  \mu_{n}\right).
\]
In particular, if the list $\left( \lambda_{1},\lambda
_{2},\ldots,\lambda_{n}\right)  $ is a Sule\u {\i}manova's type list and $\left( \mu_{1},\mu_{2},\ldots,\mu
_{n}\right) $ satisfies the condition
\begin{equation}
\mu_{1}+\mu_{2}+\cdots+\mu_{n}\leq\lambda_{1}+\lambda_{2}+\cdots+\lambda
_{n},\text{ }\label{cc1}%
\end{equation}
and%
\begin{equation}
\left\vert \frac{\mu_{1}+\mu_{2}+\cdots+\mu_{n}}{n}-\mu_{i}\right\vert
\leq\left (\frac{\lambda_{1}+\lambda_{2}+\cdots+\lambda_{n}}{n}-\lambda_{i} \right) \ \text{for} \ 2\leq i\leq n, \label{cc2}
\end{equation}
then, the lists $\sigma(S)\cup \left(\pm \gamma\sigma(C)\right)$ are respectively, realizable by the matrices $M_{\pm \gamma}$ in
(\ref{realizingM3}), where $S=\left(  s_{ij}\right) $ and $C=\left(c_{ij}\right)$ are the corresponding permutative matrices obtained from Theorem
\ref{pappa} and Lemma \ref{pappa copy(1)} by replacing with the lists of eigenvalues.
\end{theorem}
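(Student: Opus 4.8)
The plan is to deduce the general part of the statement almost directly from Theorem~\ref{main2} and Proposition~\ref{cristi}, and to treat the ``in particular'' part by producing explicit $S$ and $C$ from Theorem~\ref{pappa} and Lemma~\ref{pappa copy(1)}. First I would invoke Remark~\ref{cond}: for permutatively equivalent $S=(s_{ij})$ and $C=(c_{ij})$ with first rows $(s_{1},\ldots,s_{n})$ and $(c_{1},\ldots,c_{n})$, the hypothesis $\left\vert c_{i}\right\vert \le s_{i}$ ($1\le i\le n$) is equivalent to $\left\vert c_{ij}\right\vert \le s_{ij}$ for all $i,j$, and in particular forces $s_{ij}\ge 0$. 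Hence $s_{ij}\pm\gamma c_{ij}\ge s_{ij}-\left\vert c_{ij}\right\vert\ge 0$ for $0\le\gamma\le 1$, so $\frac{1}{2}(S\pm\gamma C)$ are nonnegative; Theorem~\ref{main2} then applies verbatim and gives that the block matrices $M_{\pm\gamma}$ of (\ref{realizingM3}) are nonnegative and realize $\sigma(S)\cup(\pm\gamma\sigma(C))$ — the point being that each $(i,j)$-block of $M_{\pm\gamma}$ has the symmetric shape of Theorem~\ref{main} with sum-entry $s_{ij}$ and difference-entry $\pm\gamma c_{ij}$, so that $\sigma(M_{\pm\gamma})=\sigma(S)\cup\sigma(\pm\gamma C)=\sigma(S)\cup(\pm\gamma\sigma(C))$. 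That $\frac{1}{2}(S\pm\gamma C)$ is permutatively equivalent to $S$ (hence the two are permutatively equivalent to each other) follows from Proposition~\ref{cristi} applied to the family $\{S,C\}$ with coefficients $\frac{1}{2}$ and $\pm\frac{\gamma}{2}$.

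It remains, for the general part, to check that each $M_{\pm\gamma}$ is itself a permutative matrix. Put $x_{\ell}=\frac{s_{1\ell}\pm\gamma c_{1\ell}}{2}$ and $y_{\ell}=\frac{s_{1\ell}\mp\gamma c_{1\ell}}{2}$, so that by (\ref{aspectM}) the first row of $M_{\pm\gamma}$ is $(x_{1},y_{1},x_{2},y_{2},\ldots,x_{n},y_{n})$. If $\mathbf{\tau}=(\tau_{1},\ldots,\tau_{n})$ witnesses the permutative equivalence of $S$ and $C$, then the $k$-th rows of $S$ and $C$ are $(s_{1,\tau_{k}(1)},\ldots,s_{1,\tau_{k}(n)})$ and $(c_{1,\tau_{k}(1)},\ldots,c_{1,\tau_{k}(n)})$, whence the $(2k-1)$-st and $(2k)$-th rows of $M_{\pm\gamma}$ are $(x_{\tau_{k}(1)},y_{\tau_{k}(1)},\ldots,x_{\tau_{k}(n)},y_{\tau_{k}(n)})$ and $(y_{\tau_{k}(1)},x_{\tau_{k}(1)},\ldots,y_{\tau_{k}(n)},x_{\tau_{k}(n)})$. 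Each of these is a rearrangement of the first row of $M_{\pm\gamma}$, and as $k$ ranges over $1,\ldots,n$ they exhaust all $2n$ rows; hence $M_{\pm\gamma}$ is permutative, and one can read off from $\mathbf{\tau}$ an explicit $2n$-tuple of elements of $S_{2n}$ realizing it.

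For the ``in particular'' part I would take $S$ to be the matrix furnished by Theorem~\ref{pappa} for the Sule\u{\i}manova list $(\lambda_{1},\ldots,\lambda_{n})$: it has the shape (\ref{mX}), with first row $s_{1}=\frac{\lambda_{1}+\cdots+\lambda_{n}}{n}\ge 0$ and $s_{i}=s_{1}-\lambda_{i}\ge 0$ for $2\le i\le n$ (since $\lambda_{i}\le 0$). For $C$ I would take a matrix of the same shape (\ref{mX}) with $\sigma(C)=(\mu_{1},\ldots,\mu_{n})$, which exists by Lemma~\ref{pappa copy(1)} and Remark~\ref{newR}; imposing via Lemma~\ref{pappa copy(1)} that $\sum_{i}c_{i}=\mu_{1}$ and $c_{1}-c_{i}=\mu_{i}$ for $2\le i\le n$ forces the first row of $C$ to be $c_{1}=\frac{\mu_{1}+\cdots+\mu_{n}}{n}$, $c_{i}=c_{1}-\mu_{i}$. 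Since $S$ and $C$ both have the shape (\ref{mX}) they are permutatively equivalent, so it suffices to verify the hypothesis $\left\vert c_{i}\right\vert\le s_{i}$, $1\le i\le n$, of the general part. For $2\le i\le n$ this reads $\left\vert\frac{\mu_{1}+\cdots+\mu_{n}}{n}-\mu_{i}\right\vert\le\frac{\lambda_{1}+\cdots+\lambda_{n}}{n}-\lambda_{i}$, which is exactly (\ref{cc2}); and (\ref{cc1}) gives $c_{1}=\frac{\mu_{1}+\cdots+\mu_{n}}{n}\le\frac{\lambda_{1}+\cdots+\lambda_{n}}{n}=s_{1}$, which together with $s_{1}\ge 0$ gives $\left\vert c_{1}\right\vert\le s_{1}$. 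Applying the general part to these $S$, $C$ then yields the asserted realizations.

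The step I expect to be the main obstacle is precisely this last verification of $\left\vert c_{i}\right\vert\le s_{i}$, and specifically the case $i=1$: condition (\ref{cc1}) delivers only the one-sided bound $c_{1}\le s_{1}$, so the matching lower bound $-c_{1}\le s_{1}$ (equivalently $\mu_{1}+\cdots+\mu_{n}\ge-(\lambda_{1}+\cdots+\lambda_{n})$) has to be squeezed out of (\ref{cc2}) — e.g.\ from $\left\vert\mu_{1}-c_{1}\right\vert=\left\vert\sum_{i\ge 2}c_{i}\right\vert\le\sum_{i\ge 2}\left\vert c_{i}\right\vert\le\sum_{i\ge 2}s_{i}=\lambda_{1}-s_{1}$ — together with the sign information $s_{1}(\sigma)\ge 0$ carried by the Sule\u{\i}manova hypothesis; in particular, if $(\mu_{i})$ is additionally known to have nonnegative sum (as it is, for instance, whenever $(\mu_{i})$ is itself realizable), then $c_{1}\ge 0$ and this bound is immediate. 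The only other non-routine point is the bookkeeping of the second paragraph needed to present $M_{\pm\gamma}$ explicitly as a permutative matrix; everything else is a direct appeal to Theorem~\ref{main2} and Proposition~\ref{cristi}.
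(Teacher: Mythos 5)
Your route for the general part is the paper's: Remark \ref{cond} turns the hypothesis $\left\vert c_{i}\right\vert\le s_{i}$ into the entrywise bounds $\left\vert c_{ij}\right\vert\le s_{ij}$, Theorem \ref{main2} then gives nonnegativity and the spectra of $M_{\pm\gamma}$, Proposition \ref{cristi} gives that $\frac{1}{2}\left(S\pm\gamma C\right)$ is permutatively equivalent to $S$, and for the ``in particular'' part your $S$ and $C$ (first rows $s_{1}=\frac{1}{n}\sum_i\lambda_{i}$, $s_{i}=s_{1}-\lambda_{i}$ and $c_{1}=\frac{1}{n}\sum_i\mu_{i}$, $c_{i}=c_{1}-\mu_{i}$) are exactly the matrices the paper takes from Theorem \ref{pappa} and Lemma \ref{pappa copy(1)}. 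Your explicit row bookkeeping showing that $M_{\pm\gamma}$ is itself permutative is a more careful version of the paper's one-line appeal to the shape (\ref{aspectM}), and it is correct.

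The obstacle you flag at $i=1$ is, however, a genuine gap, and the squeeze you sketch cannot close it. From (\ref{cc2}) you only obtain $\left\vert\mu_{1}-c_{1}\right\vert\le\sum_{i\ge2}s_{i}=\lambda_{1}-s_{1}$, i.e. $c_{1}\ge\mu_{1}-\lambda_{1}+s_{1}$, which does not imply the needed $c_{1}\ge-s_{1}$, and nothing else in the hypotheses does. In fact the ``in particular'' statement fails as written: take $(\lambda_{1},\lambda_{2})=(1,-1)$, a Sule\u{\i}manova list with $s_{1}=0$, $s_{2}=1$, and $(\mu_{1},\mu_{2})=(-1,-1)$. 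Conditions (\ref{cc1}) and (\ref{cc2}) hold, yet $c_{1}=-1$, so the diagonal entries $\frac{s_{11}\pm\gamma c_{11}}{2}=\frac{\mp\gamma}{2}$ make $M_{\pm\gamma}$ fail to be nonnegative for every $\gamma>0$; worse, for $\gamma=1$ the target list $\sigma(S)\cup\gamma\sigma(C)=(1,-1,-1,-1)$ has negative sum, hence is not the spectrum of any nonnegative matrix, so no argument can rescue the claim under (\ref{cc1})--(\ref{cc2}) alone. What is actually required is the two-sided bound $\left\vert\mu_{1}+\cdots+\mu_{n}\right\vert\le\lambda_{1}+\cdots+\lambda_{n}$, i.e. $\left\vert c_{1}\right\vert\le s_{1}$, which is what the general part uses. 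The paper's own proof passes over this point: it only observes that $\left\vert c_{i}\right\vert\le s_{i}$ yields (\ref{cc1}) and (\ref{cc2}), which is the converse of the implication the theorem needs. With (\ref{cc1}) strengthened to the absolute-value form (automatic, as you note, when $\mu_{1}+\cdots+\mu_{n}\ge0$), the case $i=1$ is immediate and your argument is complete, coinciding with the paper's.
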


\begin{proof}
Is an immediate consequence of the fact that if the matrices $S$ and $C$ in Theorem
\ref{main2} are considered to be permutatively equivalent matrices then, by
Proposition \ref{cristi}, both $\frac{1}{2}\left(  S+\gamma C \right)  $ and $\frac
{1}{2}\left(  S- \gamma C \right)  $ are permutatively to $S.$ Therefore, by the shape of
the matrices in (\ref{aspectM}) the matrices $M_{\pm \gamma}$
in (\ref{realizingM3}) become permutative matrices.
In particular, if the matrices $S$ and $C$ and its spectra $\sigma(S)$ and
$\sigma(C)$, respectively, are as in the statement, by last statement of Remark \ref{important2} the
matrices $S$ and $\pm\gamma C$ that realize the spectra $\sigma(S)$ and
$\pm\gamma \sigma(C)$ are permutatively equivalent matrices. Then
$\frac{1}{2}\left(  S+ \gamma C\right)  $ and $\frac{1}{2}\left(  S- \gamma C\right)  \ $are
nonnegative permutative matrices, implying, by the above reasoning that the matrices $M_{\pm\gamma}$ in
(\ref{aspectM}), with the given description by (\ref{realizingM3}), will be also nonnegative permutative matrices. The conditions in (\ref{cc1}) and (\ref{cc2}) are derived from
the condition $\left\vert c_{i}\right\vert \leq s_{i}$, for all $1\leq i\leq
n$ when the $n$-tuples $\left(  s_{1},\ldots,s_{n}\right)  $ and $\left(
c_{1},\ldots,c_{n}\right)  $ are the first row of $S$ and $C$, respectively,
where the corresponding descriptions of $\left(  s_{1},\ldots,s_{n}\right)  $
and $\left(  c_{1},\ldots,c_{n}\right)  $ are obtained from Lemma \ref{pappa copy(1)} and Theorem
\ref{pappa} .
\end{proof}

Note that it is important that the matrices $S$ and $C$ are permutatively equivalent otherwise we can not guarantee that the matrices $M_{{\pm}\gamma}$ are permutative. In fact consider the following example:

\begin{example}
{\rm Both matrices $S$ and $C$ are not permutatively equivalent and $M$ constructed as in the previous theorem is not permutative.
\begin{eqnarray}
S &=&%
\begin{pmatrix}
1 & 2 & 3 \\
3 & 1 & 2 \\
2 & 3 & 1%
\end{pmatrix}%
, \\
\ C &=&%
\begin{pmatrix}
0 & 1 & 2 \\
2 & 0 & 1 \\
2 & 0 & 1%
\end{pmatrix}%
\end{eqnarray}%
\begin{equation}
M=%
\begin{pmatrix}
\frac{1}{2} & \frac{1}{2} & \frac{3}{2} & \frac{1}{2} & \frac{5}{2} & \frac{1%
}{2} \\
\frac{1}{2} & \frac{1}{2} & \frac{1}{2} & \frac{3}{2} & \frac{1}{2} & \frac{5%
}{2} \\
\frac{5}{2} & \frac{1}{2} & \frac{1}{2} & \frac{1}{2} & \frac{3}{2} & \frac{1%
}{2} \\
\frac{1}{2} & \frac{5}{2} & \frac{1}{2} & \frac{1}{2} & \frac{1}{2} & \frac{3%
}{2} \\
2 & 0 & \frac{3}{2} & \frac{3}{2} & 1 & 0 \\
0 & 2 & \frac{3}{2} & \frac{3}{2} & 0 & 1%
\end{pmatrix}%
\end{equation}
}
\end{example}

In the next examples $S$ and $C$ are permutatively equivalent.

\begin{example}
{\rm
Let $S$ and $C$ be the following circulant matrices, in consequence, they are
permutatively equivalent matrices
\begin{equation*}
S=%
\begin{pmatrix}
2 & 2 & 1 \\
1 & 2 & 2 \\
2 & 1 & 2%
\end{pmatrix}%
\text{ and }C=%
\begin{pmatrix}
0 & 0 & 1 \\
1 & 0 & 0 \\
0 & 1 & 0%
\end{pmatrix}%
\end{equation*}%
whose spectra, respectively, are the following lists
\begin{equation*}
\left(5,\frac{1+i\sqrt{3}}{2},\frac{1-i\sqrt{3}}{2}\right) \ \text{and}\
\left(1,\frac{1+i\sqrt{3}}{2},\frac{1-i\sqrt{3}}{2}\right).
\end{equation*}%
It is easy to see that the conditions of Theorem \ref{main3} are verified. In consequence, the $6$-by-$6$ matrix
\begin{equation*}
M=%
\begin{pmatrix}
1 & 1 & 1 & 1 & 1 & 0 \\
1 & 1 & 1 & 1 & 0 & 1 \\
1 & 0 & 1 & 1 & 1 & 1 \\
0 & 1 & 1 & 1 & 1 & 1 \\
1 & 1 & 1 & 0 & 1 & 1 \\
1 & 1 & 0 & 1 & 1 & 1%
\end{pmatrix}%
\end{equation*}%
is a permutative matrix and realizes the list
\begin{equation*}
\left( 5,\frac{1+i\sqrt{3}}{2},\frac{1-i\sqrt{3}}{2}, 1,%
\frac{1+i\sqrt{3}}{2},\frac{1-i\sqrt{3}}{2}\right) .
\end{equation*}
}
\end{example}

\begin{example}
{\rm
Let $S$ and $C$ be the following circulant matrices, in consequence, they are
permutatively equivalent matrices
\[
S=%
\begin{pmatrix}
2 & 2 & 1\\
1 & 2 & 2\\
2 & 1 & 2
\end{pmatrix}
\text{ and }C=%
\begin{pmatrix}
0 & 0 & -1\\
-1 & 0 & 0\\
0 & -1 & 0
\end{pmatrix}
\]
whose spectra, respectively, are
\[
\left( 5,\frac{1+i\sqrt{3}}{2},\frac{1-i\sqrt{3}}{2}\right)  \ \text{and}%
\ \left( -1,\frac{-1-i\sqrt{3}}{2},\frac{-1+i\sqrt{3}}{2}\right).
\]
It is easy to see that the conditions of Theorem \ref{main3} are verified. In consequence, the $6$-by-$6$ matrix
\[
M=%
\begin{pmatrix}
1 & 1 & 1 & 1 & 0 & 1\\
1 & 1 & 1 & 1 & 1 & 0\\
0 & 1 & 1 & 1 & 1 & 1\\
1 & 0 & 1 & 1 & 1 & 1\\
1 & 1 & 0 & 1 & 1 & 1\\
1 & 1 & 1 & 0 & 1 & 1
\end{pmatrix}
\]
is nonnegative permutative and realizes the spectrum
\[
\left( 5,\frac{1+i\sqrt{3}}{2},\frac{1-i\sqrt{3}}{2},
-1,\frac{-1-i\sqrt{3}}{2},\frac{-1+i\sqrt{3}}{2}\right)  .
\]
}
\end{example}

\begin{example}
{\rm
Let $\sigma=\left( 10,7,-3,-3,-2,-2,-2-1 \right)$. The following Sule\u{\i}manova sub-lists
$\left(7,-3,-2,-2\right)$ and $\left(  10,-3,-2,-1\right)$ can be obtained from $\sigma.$
Thus, the conditions of Theorem \ref{main3} hold and by Theorem \ref{pappa} the matrix that
realizes $\left(  10,-3,-2,-1\right)  $ is
\[
S=
\begin{pmatrix}
1 & 2 & 3 & 4\\
2 & 1 & 3 & 4\\
3 & 2 & 1 & 4\\
4 & 2 & 3 & 1
\end{pmatrix}
\]
and the matrix that realizes $\left(  7,-3,-2,-2\right)$ is
\[
C=
\begin{pmatrix}
0 & 2 & 2 & 3\\
2 & 0 & 2 & 3\\
2 & 2 & 0 & 3\\
3 & 2 & 2 & 0
\end{pmatrix}.
\]
Therefore, the matrix $M$ in (\ref{aspectM}) becomes
\[
M=%
\begin{pmatrix}
\frac{1}{2} & \frac{1}{2} & 2 & 0 & \frac{5}{2} & \frac{1}{2} & \frac{7}{2} &
\frac{1}{2}\\
\frac{1}{2} & \frac{1}{2} & 0 & 2 & \frac{1}{2} & \frac{5}{2} & \frac{1}{2} &
\frac{7}{2}\\
2 & 0 & \frac{1}{2} & \frac{1}{2} & \frac{5}{2} & \frac{1}{2} & \frac{7}{2} &
\frac{1}{2}\\
0 & 2 & \frac{1}{2} & \frac{1}{2} & \frac{1}{2} & \frac{5}{2} & \frac{1}{2} &
\frac{7}{2}\\
\frac{5}{2} & \frac{1}{2} & 2 & 0 & \frac{1}{2} & \frac{1}{2} & \frac{7}{2} &
\frac{1}{2}\\
\frac{1}{2} & \frac{5}{2} & 0 & 2 & \frac{1}{2} & \frac{1}{2} & \frac{1}{2} &
\frac{7}{2}\\
\frac{7}{2} & \frac{1}{2} & 2 & 0 & \frac{5}{2} & \frac{1}{2} & \frac{1}{2} &
\frac{1}{2}\\
\frac{1}{2} & \frac{7}{2} & 0 & 2 & \frac{1}{2} & \frac{5}{2} & \frac{1}{2} &
\frac{1}{2}%
\end{pmatrix}
\]
which is a permutative matrix and realizes the initial list.
}
\end{example}

\section{Real odd spectra}

We now present spectral results for matrices partitioned into blocks and with odd order. We start with the following spectral result that is presented in an algebraic closed field, $K$, for instance $K=\mathbb{C}.$

\begin{theorem}\label{main copy(1)}
Let $K$ be an algebraically closed field of characteristic $0$ and suppose
that $A=\left( A_{ij}\right) $ is an into block square matrix of order $2n+1$, where
\[
A_{ij}=\left\{
\begin{tabular}{cc}
$%
\begin{pmatrix}
a_{ij} & b_{ij} \\
b_{ij} & a_{ij}%
\end{pmatrix}%
$ & $1\leq i,j\leq n$ \\
$%
\begin{pmatrix}
a_{ij} \\
a_{ij}%
\end{pmatrix}%
$ & $1\leq i\leq n,\ j=n+1$ \\
$%
\begin{pmatrix}
a_{ij} & b_{ij}%
\end{pmatrix}%
$ & $i=n+1,$\ $1\leq j\leq n$ \\
$a_{ij}$ & $i=n+1,$\ $\ j=n+1.$%
\end{tabular}%
\right.
\]%
If
\[
s_{ij}=\left\{
\begin{tabular}{cc}
$a_{ij}+b_{ij}$ & $1\leq i,j\leq n$ \\
$a_{ij}$ & $1\leq i\leq n,\ j=n+1$ \\
$a_{ij}+b_{ij}$ & $i=n+1,$\ $1\leq j\leq n$ \\
$a_{ij}$ & $i=n+1,$\ $\ j=n+1$%
\end{tabular}%
\right.
\]%
and%
\[
c_{ij}=a_{ij}-b_{ij},\ 1\leq i,j\leq n.
\]%
Then
\[
\sigma \left( A\right) =\sigma \left( S\right) \cup \sigma \left( C\right),
\]%
where
\[
S=\left( s_{ij}\right) \text{ and }C=\left( c_{ij}\right) .
\]
\end{theorem}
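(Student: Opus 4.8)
The plan is to mimic exactly the structure of the proof of Theorem~\ref{main}, adapting the two "lifting" maps to the extra odd coordinate. The matrix $A$ now has order $2n+1$, so a natural block vector has $n$ blocks of size $2$ and one final scalar block. First I would take an eigenpair $(\lambda,v)$ of $S=(s_{ij})$ with $v=(v_1,\dots,v_n,v_{n+1})^T$ and build a $(2n+1)$-vector $w$ whose $j$-th block is $v_j\mathbf{e}$ with $\mathbf{e}=(1,1)^T$ for $1\le j\le n$ and whose last entry is $v_{n+1}$. The computation splits into three cases according to where the block row sits: for $1\le i\le n$ one checks that $\sum_{j=1}^{n}A_{ij}w_j + A_{i,n+1}v_{n+1} = \bigl(\sum_{j=1}^{n} s_{ij}v_j + s_{i,n+1}v_{n+1}\bigr)\mathbf{e} = \lambda v_i \mathbf{e} = \lambda w_i$, using $(a_{ij}+b_{ij})\mathbf{e} = s_{ij}\mathbf{e}$ on the $2$-by-$2$ blocks and $\binom{a_{i,n+1}}{a_{i,n+1}} = s_{i,n+1}\mathbf{e}$ on the vertical block; for $i=n+1$ the row acts by the horizontal blocks $(a_{n+1,j}\ b_{n+1,j})$ on $v_j\mathbf{e}$, giving $(a_{n+1,j}+b_{n+1,j})v_j = s_{n+1,j}v_j$, plus the scalar $s_{n+1,n+1}v_{n+1}$, so the last coordinate of $Aw$ is $\sum_j s_{n+1,j}v_j + s_{n+1,n+1}v_{n+1} = \lambda v_{n+1}$. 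Hence $\sigma(S)\subseteq\sigma(A)$.

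Next I would repeat the argument on the "antisymmetric" side. Here the correct lift of an eigenpair $(\mu,x)$ of $C=(c_{ij})$, with $x=(x_1,\dots,x_n)^T\in K^n$, is the $(2n+1)$-vector $y$ whose $j$-th block ($1\le j\le n$) is $x_j\mathbf{f}$ with $\mathbf{f}=(1,-1)^T$ and whose final entry is $0$. The point is that $A_{ij}\mathbf{f} = (a_{ij}-b_{ij})\mathbf{f} = c_{ij}\mathbf{f}$ for the $2$-by-$2$ blocks; the vertical block $A_{i,n+1}$ multiplies $0$ and contributes nothing; and for the row $i=n+1$ the horizontal block gives $(a_{n+1,j}\ b_{n+1,j})x_j\mathbf{f} = (a_{n+1,j}-b_{n+1,j})x_j = 0$ summed with $a_{n+1,n+1}\cdot 0 = 0$, so the last coordinate of $Ay$ is $0$, matching $\mu\cdot 0$. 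Thus each block row $i\le n$ gives $\sum_j c_{ij}x_j\mathbf{f} = \mu x_i\mathbf{f} = \mu y_i$, and $\sigma(C)\subseteq\sigma(A)$.

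Finally I would establish that these two families of lifted eigenvectors together span $K^{2n+1}$, so that $\sigma(A)$ is exactly the multiset union. As in Theorem~\ref{main}, fix bases $\Theta_s$ of $K^{n}$ (eigenvectors of $S$, here $n+1$ of them since $S$ has order $n+1$... more precisely $S$ is $(n+1)$-by-$(n+1)$) wait—$S=(s_{ij})$ ranges over $1\le i,j\le n+1$, so $S$ has order $n+1$ and $C$ has order $n$, and $n+1+n = 2n+1$ matches. Take bases $\Theta_s=\{(x_{1i},\dots,x_{n+1,i})^T\}$ of eigenvectors of $S$ and $\Theta_c=\{(y_{1i},\dots,y_{ni})^T\}$ of eigenvectors of $C$, lift them by the two recipes above to vectors in $K^{2n+1}$, and form the $(2n+1)$-by-$(2n+1)$ coordinate determinant $d$. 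Arrange the rows so that the two coordinates of each $2$-block are adjacent and the odd coordinate is last. Adding the second row of each pair to the first (exactly the row operation used in Theorem~\ref{main}) kills the $\mathbf{f}$-type columns in the "$+$" rows and doubles the $\mathbf{e}$-type columns; after a harmless row permutation the matrix becomes block triangular, with one diagonal block the $(n+1)$-by-$(n+1)$ matrix of $\Theta_s$-coordinates (the extra last row/column handled by the odd coordinate, which only the $\Theta_s$-lifts populate) and the other the $n$-by-$n$ matrix of $\Theta_c$-coordinates, up to a factor $2^{n}$. Both are nonsingular by linear independence of $\Theta_s$ and $\Theta_c$, so $d\neq 0$.

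I expect the only real obstacle to be bookkeeping in the last step: making sure the odd coordinate is correctly slotted so that the triangularization genuinely decouples into an $(n+1)$-block and an $n$-block rather than getting an off-diagonal contamination from the $A_{i,n+1}$ and $A_{n+1,j}$ pieces. Since the $\Theta_c$-lifts have last entry $0$ and the $\mathbf{f}$-columns vanish after the row addition, this decoupling does go through, but it must be written carefully; everything else is the same pattern as Theorem~\ref{main} with three cases instead of one.
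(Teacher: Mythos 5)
Your first paragraph (the $S$-lift) and the overall plan reproduce the paper's argument faithfully, but the $C$-part contains a step that is simply false. You assert that in the last row one has $\begin{pmatrix} a_{n+1,j} & b_{n+1,j}\end{pmatrix} x_j\mathbf{f}=(a_{n+1,j}-b_{n+1,j})x_j=0$. Nothing in the hypotheses forces this: the relation $c_{ij}=a_{ij}-b_{ij}$ is only imposed for $1\le i,j\le n$, and the differences $a_{n+1,j}-b_{n+1,j}$ are completely arbitrary. So the last coordinate of $Ay$ equals $\sum_{j=1}^{n}(a_{n+1,j}-b_{n+1,j})x_j$, which need not be $0=\mu\cdot 0$, and the lifted vector $y$ is in general \emph{not} an eigenvector of $A$. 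A minimal counterexample with $n=1$: take $A=\begin{pmatrix}1&0&1\\0&1&1\\1&0&0\end{pmatrix}$, so $S=\begin{pmatrix}1&1\\1&0\end{pmatrix}$ and $C=(1)$; your lift of the $C$-eigenvector is $y=(1,-1,0)^{T}$, but $Ay=(1,-1,1)^{T}\neq 1\cdot y$. Hence the inclusion $\sigma(C)\subseteq\sigma(A)$ is not established this way, and the determinant/linear-independence step cannot rescue it, since linear independence of vectors that are not eigenvectors carries no spectral information. (To be fair, the paper's own proof checks the eigenvalue equation for the $C$-lift only in rows $1,\dots,n$ and silently skips row $n+1$; your attempt makes that missing verification explicit, and it is precisely the verification that fails. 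Your closing worry about ``off-diagonal contamination from the $A_{n+1,j}$ pieces'' was exactly the right instinct, but the contamination is real rather than absent.)

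The theorem itself is true, and the natural repair uses your two lifts as a change of basis rather than as eigenvectors. Put $u_j=e_{2j-1}+e_{2j}$ and $f_j=e_{2j-1}-e_{2j}$ for $1\le j\le n$, and keep $e_{2n+1}$. The block computations you already performed show
\begin{equation*}
Au_j=\sum_{i=1}^{n}s_{ij}u_i+s_{n+1,j}e_{2n+1},\qquad
Ae_{2n+1}=\sum_{i=1}^{n}s_{i,n+1}u_i+s_{n+1,n+1}e_{2n+1},\qquad
Af_j=\sum_{i=1}^{n}c_{ij}f_i+(a_{n+1,j}-b_{n+1,j})e_{2n+1}.
\end{equation*}
Thus $V_{+}=\operatorname{span}\{u_1,\dots,u_n,e_{2n+1}\}$ is $A$-invariant with induced matrix $S$, while $A$ induces $C$ on the quotient $K^{2n+1}/V_{+}$; equivalently, in the ordered basis $(u_1,\dots,u_n,e_{2n+1},f_1,\dots,f_n)$ the matrix of $A$ is block upper triangular of the form $\begin{pmatrix}S&\ast\\0&C\end{pmatrix}$. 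Therefore the characteristic polynomial of $A$ factors as $p_S\,p_C$ and $\sigma(A)=\sigma(S)\cup\sigma(C)$ counted with multiplicities. This argument also removes the diagonalizability of $S$ and $C$ that both your sketch and the paper tacitly assume when postulating bases of eigenvectors $\Theta_s$ and $\Theta_c$.
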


\begin{proof}
Let $\left( \lambda ,v\right) $ be an eigenpair of $S$,\ with $v=\left(
v_{1},\ldots ,v_{n},v_{n+1}\right) ^{T}$, and consider the $\left(
2n+1\right) $-by-$1$ block vector $w=%
\begin{pmatrix}
\left( w_{j}\right)  \\
w_{n+1}%
\end{pmatrix}%
$, where by an abuse of notation, we have
\[
w_{j}=\left\{
\begin{tabular}{cc}
$v_{j}\mathbf{e}$ & $1\leq j\leq n$ \\
$v_{n+1}$ & $j=n+1.$%
\end{tabular}%
\right.
\]
Since
\begin{eqnarray*}
A_{ij}w_{j} &=&%
\begin{pmatrix}
a_{ij} & b_{ij} \\
b_{ij} & a_{ij}%
\end{pmatrix}%
v_{j}\mathbf{e=}%
\begin{pmatrix}
a_{ij}+b_{ij} \\
b_{ij}+a_{ij}%
\end{pmatrix}%
v_{j}=s_{ij}v_{j}\mathbf{e}\text{,\quad }1\leq i,j\leq n \\
A_{i,n+1}w_{n+1} &=&%
\begin{pmatrix}
a_{ij} \\
a_{ij}%
\end{pmatrix}%
w_{n+1}=s_{i,n+1}v_{n+1}\mathbf{e,}\text{\quad }1\leq i\leq n \\
A_{n+1,j}w_{j} &=&%
\begin{pmatrix}
a_{n+1,j} & b_{n+1,j}%
\end{pmatrix}%
w_{j}=%
\begin{pmatrix}
a_{n+1,j} & b_{n+1,j}%
\end{pmatrix}%
\left( v_{j}\mathbf{e}\right)\\
&=& \left( a_{n+1,j}+b_{n+1,j}\right)
v_{j}=s_{n+1,j}v_{j}\mathbf{,}\text{\quad }1\leq j\leq n
\end{eqnarray*}%
Finally,
\[
A_{n+1},_{n+1}w_{n+1}=s_{n+1,n+1}v_{n+1}.
\]%
Notice that, for every $i\in \{1,\ldots ,n\},$%
\begin{eqnarray*}
\sum\limits_{j=1}^{n+1}A_{ij}w_{j}& = &\sum
\limits_{j=1}^{n}A_{ij}w_{j}+A_{i,n+1}w_{n+1}\\
&= & \left(
\sum\limits_{j=1}^{n}s_{ij}v_{j}+s_{i,n+1}v_{n+1}\right) \mathbf{e=}\left(
\lambda v_{i}\right) \mathbf{e=}\lambda \left( v_{i}\mathbf{e}\right)
=\lambda w_{i}
\end{eqnarray*}
and%
\begin{eqnarray*}
\sum\limits_{j=1}^{n+1}A_{n+1,j}w_{j}& = & \sum \limits_{j=1}^{n}s_{n+1,j}v_{j}+A_{n+1,n+1}w_{n+1}\\
&= &\left(
\sum\limits_{j=1}^{n}s_{n+1,j}v_{j}+s_{n+1,n+1}v_{n+1}\right) \mathbf{=}%
\lambda v_{n+1}\mathbf{=}\lambda w_{n+1}
\end{eqnarray*}
i.e $\left( \lambda ,w\right) $ is an eigenpair of $A$.\ Thus $\sigma \left(
S\right) \subseteq \sigma \left( A\right) $.

Similarly, let $\left( \mu ,x\right) $ be an eigenpair of $C,$ with $%
x=\left( x_{1},\ldots ,x_{n}\right) ^{T}$ and consider the $\left(
2n+1\right) $-by-$1$ block vector $y=%
\begin{pmatrix}
\left( y_{j}\right)  \\
0%
\end{pmatrix}%
$, where $y_{j}:=x_{j}\mathbf{f}$ and $\mathbf{f=}\left( 1,-1\right) ^{T}$.
Since
\[
A_{ij}y_{j}=%
\begin{pmatrix}
a_{ij} & b_{ij} \\
b_{ij} & a_{ij}%
\end{pmatrix}%
x_{j}\mathbf{f=}%
\begin{pmatrix}
a_{ij}-b_{ij} \\
b_{ij}-a_{ij}%
\end{pmatrix}%
x_{j}=c_{ij}x_{j}\mathbf{f}\text{,}
\]%
notice that, for every $i=1,\ldots ,n,$%
\[
\sum\limits_{j=1}^{n+1}A_{ij}y_{j}=\left( \sum\limits_{j=1}^{n}c_{ij}x_{j}%
\mathbf{f+}A_{i,n+1}y_{n+1}\right) \mathbf{=}\left( \lambda x_{i}\right)
\mathbf{f=}\lambda \left( x_{i}\mathbf{f}\right) =\lambda y_{i}
\]%
i.e $\left( \mu ,y\right) $ be an eigenpair of $A$.\ Thus $\sigma \left(
C\right) \subseteq \sigma \left( A\right) $.
Suppose that
$$\Theta_{s}=\left\{  \left(  x_{1i}\mathbf{,}x_{2i}\mathbf{,\ldots,}x_{ni},x_{n+1,i}\right)^{T}:i=1,\ldots,n+1\right\}$$ and
$$\Theta_{c}=\left\{  \left(  y_{1i}\mathbf{,}y_{2i}\mathbf{,\ldots,}y_{ni}\right) ^{T}:i=1,\ldots,n\right\}$$ are bases of eigenvectors of $S$
and $C$, respectively. The result will follow after proving the linear independence of the following set $\Upsilon= \Upsilon_1 \cup \Upsilon_2$
where
$$
\small{
\Upsilon_1=\left\{ \left(  x_{1}\mathbf{e}_{2}^{T}, x_{2}\mathbf{e}_{2}^{T},\ldots,x_{n}\mathbf{e}_{2}^{T},x_{n+1}\right)
^{T}:\left(  x_{1},x_{2}\ldots,x_{n},x_{n+1}\right)
^{T}\in\text{ }\Theta_{s}\right\}}$$
and
$$
\small{
\Upsilon_2=
\left\{  \left(  y_{1}\mathbf{f}%
_{2}^{T},y_{2}\mathbf{f}_{2}^{T},\ldots,y_{n}\mathbf{f}%
_{2}^{T},0\right)  ^{T}:\left(  \eta,\left(  y_{1},y_{2}%
,\ldots,y_{n}\right)  ^{T}\right)  \text{ }\in\text{ }\Theta
_{c}\right\}}.
$$
To this aim, we study the next determinant:
\[
d=%
\begin{vmatrix}
y_{11} & \ldots & y_{1n} & x_{11} & \ldots & x_{1n} & x_{1,n+1}\\
-y_{11} & \ldots & -y_{1n} & x_{11} & \ldots & x_{1n} & x_{1,n+1}\\
\vdots & \ddots & \vdots & \vdots & \ddots & \vdots & \vdots\\
\vdots & \ddots & \vdots & \vdots & \ddots & \vdots & \vdots\\
y_{n1} & \ldots & y_{nn} & x_{n1} & \ldots & x_{nn} & x_{n,n+1}\\
-y_{n1} & \ldots & -y_{nn} & x_{n1} & \ldots & x_{nn} & x_{n,n+1}\\
0 & \ldots & 0 & x_{n+1,1} & \ldots & x_{n+1,n} & x_{n_{+1},n+1}%
\end{vmatrix}
.
\]
Note that $d$ stands for the determinant of a $(2n+1)$-by-$(2n+1)$ matrix obtained from the coordinates of the vectors in
$\Upsilon$.\ As before, adding rows and making suitable row permutations we
conclude that the absolute value of $d$ coincides with the absolute value of
the following determinant
\[%
\begin{vmatrix}
y_{11} & \ldots & y_{1n} & x_{11} & \ldots & x_{1n} & x_{1,n+1}\\
\vdots & \ddots & \vdots & \vdots & \ddots & \vdots & \vdots\\
y_{n1} & \ldots & y_{nn} & x_{n1} & \ldots & x_{nn} & x_{n,n+1}\\
0 & \ldots & 0 & 2x_{11} & \ldots & 2x_{1n} & 2x_{1,n+1}\\
0 & \ldots & 0 & \vdots & \ldots & \vdots & \vdots\\
0 & \ldots & 0 & 2x_{n,1} & \ldots & 2x_{n,n} & 2x_{n,n+1}\\
0 & \ldots & 0 & x_{n+1,1} & \ldots & x_{n+1,n} & x_{n_{+1},n+1}%
\end{vmatrix}
\]
which is nonzero by the linear independence of the set $\Theta_{s}$ and
$\Theta_{c}.\ $Thus the statement follows.

\end{proof}

\begin{theorem}
\label{main2 copy(1)}Let $S=\left(  s_{ij}\right)  $ be a matrix of order $n+1$ and $C=\left(  c_{ij}\right)$ a matrix of order $n$ whose spectra (counted with their multiplicities) are $\sigma(S)=\left(  \lambda_{1},\lambda_{2},\ldots,\lambda_{n+1}\right)  $ and
$\sigma(C)=\left(  \mu_{1},\mu_{2},\ldots,\mu_{n}\right)  $,
respectively. Moreover, suppose that $s_{ij}\geq\left\vert c_{ij} \right\vert $ for all $1\leq i,j \leq n, s_{i,n+1}\geq 0$, for $i=1,\ldots, n+1$ and $ \varphi_{n+1,i}^{\left( j \right)}\geq 0$, for $j=1,2$ and for $i=1,\ldots, n$. Then, for all
$0 \leq \gamma \leq 1, $
the nonnegative matrices
\begin{equation}
M_{\pm\gamma}=%
\begin{pmatrix}
\frac{s_{11}\pm\gamma c_{11}}{2} & \frac{s_{11}\mp\gamma c_{11}}{2} &
\ldots & \ldots & \frac{s_{1n}\pm\gamma c_{1n}}{2} & \frac{s_{1n}\mp\gamma
c_{1n}}{2} & s_{1,n+1}\\
\frac{s_{11}\mp\gamma c_{11}}{2} & \frac{s_{11}\pm\gamma c_{11}}{2} &
\ldots & \ldots & \frac{s_{1n}\mp\gamma c_{1n}}{2} & \frac{s_{1n}\pm\gamma
c_{1n}}{2} & s_{1,n+1}\\
\vdots & \vdots & \ddots & \ddots & \vdots & \vdots & \vdots\\
\vdots & \vdots & \ddots & \ddots & \vdots & \vdots & \vdots\\
\frac{s_{n1}\pm\gamma c_{n1}}{2} & \frac{s_{n1}\mp\gamma c_{n1}}{2} &
\ldots & \ldots & \frac{s_{nn}\pm\gamma c_{nn}}{2} & \frac{s_{nn}\mp\gamma
c_{nn}}{2} & s_{n,n+1}\\
\frac{s_{n1}\mp\gamma c_{n1}}{2} & \frac{s_{n1}\pm\gamma c_{n1}}{2} &
\ldots & \ldots & \frac{s_{nn}\mp\gamma c_{nn}}{2} & \frac{s_{nn}\pm\gamma
c_{nn}}{2} & s_{n,n+1}\\
\varphi_{n+1,1}^{\left(1\right)} & \varphi_{n+1,1}^{\left(2\right)} & \ldots & \ldots & \varphi
_{n+1,n}^{\left(1\right)} & \varphi_{n+1,n}^{\left(2\right)} & s_{n+1,n+1}%
\end{pmatrix}
,\label{matrixM2}%
\end{equation}
$$\varphi_{n+1,i}^{\left(1 \right)}+\varphi_{n+1,i}^{\left(2 \right)}=s_{n+1,i},\ {i=1,2,\ldots,n}$$
have spectra
\[
\sigma(S)\cup \pm\gamma\sigma(C)=\left(  \lambda_{1},\lambda_{2},\ldots,\lambda
_{n+1},\pm\gamma\mu_{1},\pm\gamma\mu_{2},\ldots,\pm\gamma\mu_{n}\right)  .
\]
\end{theorem}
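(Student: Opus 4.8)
The plan is to exhibit $M_{\pm\gamma}$ as a block matrix of the exact form treated in Theorem \ref{main copy(1)} and then to read off its spectrum from that theorem, in complete analogy with the way Theorem \ref{main2} was deduced from Theorem \ref{main}. First I would partition $M_{\pm\gamma}=(A_{ij})$, $1\leq i,j\leq n+1$, according to the pattern prescribed in Theorem \ref{main copy(1)}: for $1\leq i,j\leq n$ the $2$-by-$2$ block is $A_{ij}=\begin{pmatrix} a_{ij} & b_{ij}\\ b_{ij} & a_{ij}\end{pmatrix}$ with $a_{ij}=\frac{s_{ij}\pm\gamma c_{ij}}{2}$ and $b_{ij}=\frac{s_{ij}\mp\gamma c_{ij}}{2}$; for $1\leq i\leq n$ and $j=n+1$ the $2$-by-$1$ block is $\begin{pmatrix} s_{i,n+1}\\ s_{i,n+1}\end{pmatrix}$; for $i=n+1$ and $1\leq j\leq n$ the $1$-by-$2$ block is $\begin{pmatrix}\varphi_{n+1,j}^{(1)} & \varphi_{n+1,j}^{(2)}\end{pmatrix}$; and $A_{n+1,n+1}=s_{n+1,n+1}$. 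This matches the hypotheses of Theorem \ref{main copy(1)} verbatim.

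Next I would compute the auxiliary matrices $S'=(s'_{ij})$ of order $n+1$ and $C'=(c'_{ij})$ of order $n$ attached to this block matrix by Theorem \ref{main copy(1)}. From the definitions there, $s'_{ij}=a_{ij}+b_{ij}=s_{ij}$ for $1\leq i,j\leq n$; $s'_{i,n+1}=s_{i,n+1}$ for $1\leq i\leq n$; $s'_{n+1,j}=\varphi_{n+1,j}^{(1)}+\varphi_{n+1,j}^{(2)}=s_{n+1,j}$ for $1\leq j\leq n$ by the stipulated constraint $\varphi_{n+1,i}^{(1)}+\varphi_{n+1,i}^{(2)}=s_{n+1,i}$; and $s'_{n+1,n+1}=s_{n+1,n+1}$. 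Hence $S'=S$. Likewise $c'_{ij}=a_{ij}-b_{ij}=\pm\gamma c_{ij}$ for $1\leq i,j\leq n$, so $C'=\pm\gamma C$. Theorem \ref{main copy(1)} then yields $\sigma(M_{\pm\gamma})=\sigma(S')\cup\sigma(C')=\sigma(S)\cup(\pm\gamma)\sigma(C)$, which, with the prescribed spectra $\sigma(S)=(\lambda_1,\ldots,\lambda_{n+1})$ and $\sigma(C)=(\mu_1,\ldots,\mu_n)$, is exactly the list $(\lambda_1,\ldots,\lambda_{n+1},\pm\gamma\mu_1,\ldots,\pm\gamma\mu_n)$ claimed in the statement.

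Finally I would verify that $M_{\pm\gamma}$ is entrywise nonnegative: for $1\leq i,j\leq n$ the entries $\frac{s_{ij}\pm\gamma c_{ij}}{2}$ and $\frac{s_{ij}\mp\gamma c_{ij}}{2}$ are nonnegative because $0\leq\gamma\leq 1$ and $|c_{ij}|\leq s_{ij}$ together give $|\gamma c_{ij}|\leq s_{ij}$; the entries $s_{i,n+1}$ are nonnegative by hypothesis for $i=1,\ldots,n+1$ (which in particular covers $s_{n+1,n+1}$); and the entries $\varphi_{n+1,j}^{(1)},\varphi_{n+1,j}^{(2)}$ are nonnegative by hypothesis. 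The argument has essentially no obstacle; the only point that needs care is the bookkeeping identifying $S'$ with the original $S$, namely checking that the freedom in splitting $s_{n+1,j}$ as $\varphi_{n+1,j}^{(1)}+\varphi_{n+1,j}^{(2)}$ leaves the first $n$ rows of $S'$ untouched while the imposed constraint recovers the last row of $S$.
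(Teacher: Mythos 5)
Your proposal is correct and follows exactly the route of the paper, whose entire proof is the single sentence that the result follows from a direct application of Theorem \ref{main copy(1)} to the matrix $M_{\pm\gamma}$ in (\ref{matrixM2}); you have simply spelled out the block identification $S'=S$, $C'=\pm\gamma C$ and the nonnegativity checks that the paper leaves implicit.
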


\begin{proof}
The result follows from a direct application of Theorem \ref{main copy(1)} to the matrix $M$ in
(\ref{matrixM2}).
\end{proof}

\begin{example}
{\rm
Let  $S=
\begin{pmatrix}
1 & 1 & 0\\
1 & 2 & 1\\
0 & 1 & 1
\end{pmatrix}
$ and $C=%
\begin{pmatrix}
1 & 0\\
0 & 1
\end{pmatrix}
$ be matrices which spectrum are $\ \left(  1,0,3\right)  $ and $\left(
1,1\right)$, respectively.  Let
\[
M=
\begin{pmatrix}
1 & 0 & \frac{1}{2} & \frac{1}{2} & 0\\
0 & 1 & \frac{1}{2} & \frac{1}{2} & 0\\
\frac{1}{2} & \frac{1}{2} & \frac{3}{2} & \frac{1}{2} & 1\\
\frac{1}{2} & \frac{1}{2} & \frac{1}{2} & \frac{3}{2} & 1\\
0 & 0 & \frac{1}{2} & \frac{1}{2} & 1
\end{pmatrix}.
\]
Then, $M$ has eigenvalues
\[
\sigma(M)=\left(  3,0,1,1,1\right)  .
\]
}
\end{example}

\begin{theorem}
\label{odd1}Let $A=\left(  a_{i,j}\right)  $ and $B=\left(  b_{i,j}\right)$
be matrices of order $n$. Moreover, consider the $n$-tuples
$$\mathbf{x}=\left(  x_{1},\ldots,x_{n}\right)  ^{T}$$ and $$\mathbf{y}^{T}=\left(
y_{1},\ldots,y_{n}\right)  ^{T}.$$ Let
\begin{equation}
S=\left(
\begin{tabular}
[c]{c|c}%
$A+B$ & $\mathbf{x}$\\\hline
$2\mathbf{y}$ & $u$
\end{tabular}
\right)  \text{ and }C=A-B\label{odd}%
\end{equation}
with $A+B$ and $A-B$  nonnegative matrices and with $\mathbf{x}$, $\mathbf{y}$ and $u$  also nonnegative and, consider the matrix partitioned into blocks
\begin{equation}
M=
\begin{pmatrix}
M_{11} & M_{12} & \ldots & \ldots & M_{1n} & \mathbf{x}_{1}\\
M_{21} & M_{22} & \ldots & \ldots & M_{2n} & \mathbf{x}_{2}\\
\vdots & \vdots & \ddots & \ddots & \vdots & \vdots\\
\vdots & \vdots & \ddots & \ddots & \vdots & \vdots\\
M_{n1} & M_{n2} & \ldots & \ldots & M_{nn} & \mathbf{x}_{n}\\
\mathbf{y}_{1} & \mathbf{y}_{2} & \ldots & \ldots & \mathbf{y}_{n} & u
\end{pmatrix}
,\label{blockodd}
\end{equation}
where, for $1\leq i,j\leq n$
\[
M_{ij}=
\begin{pmatrix}
a_{ij} & b_{ij}\\
b_{ij} & a_{ij}%
\end{pmatrix}
\text{,\ }\mathbf{x}_{i}=\left(  x_{i},x_{i}\right)  ^{T}\text{ and
}\mathbf{y}_{i}=\left(  y_{i},y_{i}\right)  .
\]
Then
\[
\sigma(M)  =\sigma(S)  \cup\sigma(C).
\]
Moreover, the matrix $M$ is nonnegative symmetric when $A$, $B$
are symmetric matrices and $\ \mathbf{x}^{T}=\mathbf{y}.$
\end{theorem}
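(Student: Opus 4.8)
The plan is to recognize the matrix $M$ of (\ref{blockodd}) as a particular case of the block matrix treated in Theorem \ref{main copy(1)}, which immediately delivers the spectral identity, and then to read off the structural claims directly from the explicit entries of $M$.

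First I would match notation with Theorem \ref{main copy(1)}, applied in size $2n+1$ with the following data: for $1\le i,j\le n$ keep the $a_{ij}$, $b_{ij}$ as given; put $a_{i,n+1}:=x_i$ for $1\le i\le n$; put $a_{n+1,j}:=b_{n+1,j}:=y_j$ for $1\le j\le n$; and put $a_{n+1,n+1}:=u$. With these choices the block in position $(i,n+1)$ is $(a_{i,n+1},a_{i,n+1})^T=\mathbf{x}_i$, the block in position $(n+1,j)$ is $(a_{n+1,j},b_{n+1,j})=\mathbf{y}_j$, and the $(n+1,n+1)$ entry is $u$; that is, the block matrix of Theorem \ref{main copy(1)} is exactly $M$. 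That theorem then gives $\sigma(M)=\sigma(\widehat S)\cup\sigma(\widehat C)$, where $\widehat S=(s_{ij})$ and $\widehat C=(c_{ij})$ are computed from its formulas: for $1\le i,j\le n$ one has $s_{ij}=a_{ij}+b_{ij}$, the $(i,j)$ entry of $A+B$; $s_{i,n+1}=a_{i,n+1}=x_i$; $s_{n+1,j}=a_{n+1,j}+b_{n+1,j}=2y_j$; $s_{n+1,n+1}=u$; and $c_{ij}=a_{ij}-b_{ij}$. Hence $\widehat S$ is precisely the matrix $S$ of (\ref{odd}) and $\widehat C=A-B=C$, so $\sigma(M)=\sigma(S)\cup\sigma(C)$ (as multisets, by the linear independence of eigenvectors established inside the proof of Theorem \ref{main copy(1)}).

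For the last assertion I would argue from the explicit form (\ref{blockodd}). Every entry of $M$ equals some $a_{ij}$ or $b_{ij}$ with $1\le i,j\le n$, some $x_i$ or $y_i$ with $1\le i\le n$, or $u$; under the nonnegativity hypotheses on the data (for the entrywise nonnegativity of $M$ one needs $a_{ij},b_{ij}\ge 0$, i.e. $A,B\ge 0$, together with $\mathbf{x},\mathbf{y},u\ge 0$) all of these are $\ge 0$, so $M\ge 0$. For symmetry, when $A=A^T$ and $B=B^T$ each block $M_{ij}$ is symmetric and $M_{ij}^T=M_{ji}$ (because $a_{ij}=a_{ji}$, $b_{ij}=b_{ji}$), so the $2n$-by-$2n$ leading principal submatrix of $M$ is symmetric; the last block column of $M$ is the stack of $2$-by-$1$ blocks $\mathbf{x}_i=(x_i,x_i)^T$, whose block transpose is the row of $1$-by-$2$ blocks $(x_i,x_i)$, and this coincides with the last block row $\mathbf{y}_i=(y_i,y_i)$ exactly when $\mathbf{x}^T=\mathbf{y}$; the corner $u$ is a scalar. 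Therefore $M=M^T$.

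The whole argument is essentially bookkeeping, so the only point that needs care is the factor $2$: in Theorem \ref{main copy(1)} the last row of the reduced matrix is obtained by \emph{summing} the two entries of each $1$-by-$2$ bottom block of $M$, and those blocks are $(y_i,y_i)$, so the $(n+1,j)$ entry of the reduced matrix is $2y_j$, not $y_j$. This is exactly why the $S$ of (\ref{odd}) carries $2\mathbf{y}$ in its last row while its last column is the plain $\mathbf{x}$; note in particular that $S$ is in general \emph{not} symmetric even when $M$ is (symmetry of $S$ would force $\mathbf{x}=0$), but this is harmless, since $S$ and $C$ enter the conclusion only through their spectra.
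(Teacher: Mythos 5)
Your proposal is correct and follows exactly the paper's route: the paper proves this theorem in one line as a direct consequence of Theorem \ref{main copy(1)}, and you have simply spelled out the bookkeeping (the identification $a_{i,n+1}=x_i$, $a_{n+1,j}=b_{n+1,j}=y_j$, $a_{n+1,n+1}=u$, which produces the $2\mathbf{y}$ row and recovers $S$ and $C$ of (\ref{odd})) together with the routine check of symmetry and nonnegativity. Your parenthetical observation that entrywise nonnegativity of $M$ really requires $A,B\geq 0$ (not just $A+B\geq 0$ and $A-B\geq 0$, which only force $a_{ij}\geq\lvert b_{ij}\rvert$) is a fair and worthwhile precision that the paper's one-line proof glosses over.
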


\begin{proof}
This result is a clear consequence of Theorem \ref{main copy(1)}.
\end{proof}

\begin{example}
{\rm
Let us consider the list $\sigma=\left(  2,\frac{-1+\sqrt{5}}{2},\frac
{-1+\sqrt{5}}{2},\frac{-1-\sqrt{5}}{2},\frac{-1-\sqrt{5}}{2}\right)  .$
}
\end{example}

If we want to apply the known sufficient conditions of Laffey and Smigoc, \cite{LaffSmgc}, it is
not possible to obtain a partition of $\sigma$ where each of its subset has
cardinality three. Nevertheless, the matrices
\[
S=%
\begin{pmatrix}
0 & 1 & 1\\
1 & 1 & 0\\
2 & 0 & 0
\end{pmatrix}
\text{ and }C=%
\begin{pmatrix}
0 & 1\\
1 & -1
\end{pmatrix}
\]
have respectively, the following list of eigenvalues
\[
\left(  2,\frac{-1+\sqrt{5}}{2},\frac{-1+\sqrt{5}}{2}\right)  \text{ and
}\left(  \frac{-1-\sqrt{5}}{2},\frac{-1-\sqrt{5}}{2}\right) .
\]
In consequence, we consider the matrix $M$ in (\ref{blockodd})
\[
M=%
\begin{pmatrix}
0 & 0 & 1 & 0 & 1\\
0 & 0 & 0 & 1 & 1\\
1 & 0 & 0 & 1 & 0\\
0 & 1 & 1 & 0 & 0\\
1 & 1 & 0 & 0 & 0
\end{pmatrix},
\]
and by Theorem \ref{odd1}
this matrix $M$ realizes the list
\[
\small{
\left(  2,\frac{-1+\sqrt{5}}{2},\frac{-1+\sqrt{5}}{2},\frac{-1-\sqrt{5}}%
{2},\frac{-1-\sqrt{5}}{2}\right)  .}
\]
In \cite{SmgcH} it was proven that if $\sigma=\left(  \lambda_{1},\ldots
,\lambda_{n}\right)  $ is a list of complex numbers whose Perron root is
$\lambda_{1}$ and with $\lambda_{i}\in\Upsilon=\left\{  z\in\mathbb{C}
\text{:}\operatorname{Re}z<0,\ \left\vert \sqrt{3}\operatorname{Re}
z\right\vert \geq\left\vert \operatorname{Im}z\right\vert \right\}  $, for
$i=2,\ldots,n$, then there exists a nonnegative matrix realizing the list
$\sigma$ if and only if $
{\displaystyle\sum\limits_{i=1}^{n}}
\lambda_{i}\geq0.$ The example below shows that the set $\Upsilon$ can widen out.

\begin{example}
{\rm
Let
\[
S=%
\begin{pmatrix}
4 & 3 & 5\\
5 & 4 & 3\\
3 & 5 & 4
\end{pmatrix}
\text{ and }C=%
\begin{pmatrix}
4 & 3\\
-3 & 4
\end{pmatrix}
\]
whose spectra are%
\begin{equation}
\sigma(S)=\left(  12,i\sqrt{3},-i\sqrt{3}\right)  \text{ and }\sigma(C)=\left(  4+3i,4-3i\right)  . \label{spectra}
\end{equation}
Both matrices satisfy the conditions of Theorem \ref{main2 copy(1)} and, the matrix $M$
obtained from $S$ and $C$ with the techniques above
\[
M=%
\begin{pmatrix}
4           &              0 &           3 &            0 & 5\\
0           &              4 &           0 &            3 & 5\\
1           &              4 &           4 &            0 & 3\\
4           &              1 &           0 &            4 & 3\\
\frac{3}{2} &    \frac{3}{2} & \frac{5}{2} & \frac{5}{2}  & 4
\end{pmatrix}
\]
realizes the complex list

\begin{equation*}
\sigma(M)=\left(  12,i\sqrt{3},-i\sqrt{3},4+3i,4-3i\right)  \\ \text{where},\ i\sqrt{3},-i\sqrt{3},4+3i,4-3i \notin{\Upsilon}.
\end{equation*}
}
\end{example}

With this example we illustrate the fact that it is possible to find a nonnegative matrix that realizes a certain list of complex numbers that are not only in $\Upsilon.$ Moreover, note that the list at the example also verifies the condition that the sum of its elements is greater or equal than zero.\\
The next example shows that accordingly to Theorem \ref{main2 copy(1)} the next matrix $M$ also realizes the complex list and, therefore it is worth to notice that there is more than one matrix that realizes it.
\begin{example}
{\rm
Let
\[
S=%
\begin{pmatrix}
4 & 3 & 5\\
5 & 4 & 3\\
3 & 5 & 4
\end{pmatrix}
\text{ and }C=%
\begin{pmatrix}
4 & 3\\
-3 & 4
\end{pmatrix}
\]
be the matrices as previous example,
whose spectra are as in (\ref{spectra}).
Both matrices satisfy the conditions of Theorem \ref{main2 copy(1)} and, the matrix $M$
obtained from $S$ and $C$ with the techniques above (recall that the construction of $M$ in (\ref{matrixM2}).
\[
M=%
\begin{pmatrix}
4 & 0 & 3 & 0 & 5\\
0 & 4 & 0 & 3 & 5\\
1 & 4 & 4 & 0 & 3\\
4 & 1 & 0 & 4 & 3\\
3 & 0 & 5 & 0 & 4
\end{pmatrix}
\]
realizes the same complex list.
}
\end{example}

\section{Guo Perturbations}

In what follows the lists are considered as ordered an $n$-tuples. Guo \cite{GuoWen}, in a partial continuation of a work by Fiedler extended some spectral properties of symmetric nonnegative matrices to general nonnegative matrices. Moreover, he introduced the following interesting question:

If the list $\sigma=\left(\lambda_{1},\lambda_{2},\ldots,\lambda_{n}\right)$ is symmetrically
realizable (that is, $\sigma$ is the spectrum of a symmetric nonnegative matrix), and $t>0$, whether (or not) the list $\sigma_{t}=\left(  \lambda_{1}+t,\lambda_{2}\pm t,\lambda_{3},\ldots,\lambda_{n}\right)  $ is also symmetrically realizable?

In \cite{RSGuo} the authors gave  an affirmative answer to this question in the case that the realizing matrix is circulant or left circulant.

They also presented  a necessary and sufficient condition for $\sigma$ to be the
spectrum of a nonnegative 
circulant matrix. The following result was presented.

\begin{theorem} \cite{RSGuo}
\label{guoper}Let $\sigma=\left(  \lambda_{1},\lambda_{2},\lambda_{3}%
,\ldots,\overline{\lambda}_{3},\overline{\lambda}_{2}\right)  $ be the
spectrum of an $n$-by-$n$ nonnegative circulant matrix. Let $t\geq0$ and
$\theta\in\mathbb{R}$. Then
\[
\sigma_{t}=\left(  \lambda_{1}+2t,\lambda_{2}\pm t\exp\left(  i\theta\right)
,\lambda_{3},\ldots,\overline{\lambda}_{3},\overline{\lambda}_{2}\pm
t\exp\left(  -i\theta\right)  \right)
\]
is also the spectrum of an $n$-by-$n$ nonnegative circulant matrix. Moreover,
if $n=2m+2\,,$ then
\end{theorem}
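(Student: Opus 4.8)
The plan is to use the fact that every $n$-by-$n$ circulant matrix is diagonalized by the Fourier vectors: with $\omega=e^{2\pi i/n}$ and $v_j=\left(1,\omega^{j},\ldots,\omega^{(n-1)j}\right)^{T}$, a circulant $C$ with first row $\left(c_0,\ldots,c_{n-1}\right)$ satisfies $Cv_j=\alpha_j v_j$, where $\alpha_j=\sum_{k=0}^{n-1}c_k\omega^{jk}$, $j=0,\ldots,n-1$. Since the circulants of order $n$ are exactly the polynomials in the cyclic shift, they form a commutative algebra, so for any two circulants $C,D$ of order $n$ the eigenvalue of $C+tD$ at $v_j$ is $\alpha_j+t\mu_j$, where $\mu_j$ is the eigenvalue of $D$ at $v_j$. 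Because $\sigma$ is closed under conjugation with Perron element $\lambda_1$, and a nonnegative circulant realizing $\sigma$ has real first row (so $\alpha_0$ is the row sum $=\lambda_1$ and $\alpha_{n-j}=\overline{\alpha_j}$), one may label so that $\left(\alpha_0,\alpha_1,\ldots,\alpha_{n-1}\right)=\left(\lambda_1,\lambda_2,\lambda_3,\ldots,\overline{\lambda}_3,\overline{\lambda}_2\right)$. It then suffices to produce a \emph{nonnegative} circulant $D$ of order $n$ with $\mu_0=2$, $\mu_1=\pm e^{i\theta}$, $\mu_{n-1}=\pm e^{-i\theta}$ and $\mu_j=0$ otherwise; for $t\ge 0$ the matrix $C_t:=C+tD$ is then a nonnegative circulant with spectrum $\sigma_t$.

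First I would write down $D$ explicitly through its first row
\[
d_k=\frac{2}{n}\left(1\pm\cos\!\left(\theta-\frac{2\pi k}{n}\right)\right),\qquad k=0,1,\ldots,n-1,
\]
which is nonnegative since $\cos\!\left(\theta-\frac{2\pi k}{n}\right)\ge -1$. Using $\cos\!\left(\theta-\frac{2\pi k}{n}\right)=\tfrac12\left(e^{i\theta}\omega^{-k}+e^{-i\theta}\omega^{k}\right)$ together with the orthogonality relation $\sum_{k=0}^{n-1}\omega^{\ell k}=n$ when $n$ divides $\ell$ and $0$ otherwise, the eigenvalues of $D$ come out as
\[
\mu_j=\sum_{k=0}^{n-1}d_k\omega^{jk}=2\,\delta_{j,0}\pm e^{i\theta}\delta_{j,1}\pm e^{-i\theta}\delta_{j,n-1},
\]
that is, $\mu_0=2$, $\mu_1=\pm e^{i\theta}$, $\mu_{n-1}=\pm e^{-i\theta}$ and $\mu_j=0$ for the remaining $j$, as required. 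The conjugation symmetry enters precisely here: a real first row forces $\mu_{n-j}=\overline{\mu_j}$, which is what makes $D$, and hence $C_t$, a genuine real nonnegative matrix, and what couples the perturbations of $\lambda_2$ and $\overline{\lambda}_2$ through the signs $e^{i\theta}$ and $e^{-i\theta}$.

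Then the conclusion is immediate: $C_t=C+tD$ is a nonnegative circulant of order $n$ (a nonnegative scalar $t$ times the nonnegative circulant $D$ added to the nonnegative circulant $C$) whose eigenvalue at $v_j$ is $\alpha_j+t\mu_j$, so its spectrum is exactly $\sigma_t$. For the addendum in the case $n=2m+2$, the second real eigenvalue of $\sigma$ occupies the slot $v_{m+1}$ with $\omega^{m+1}=-1$, and the identical argument applies after replacing $D$ by the circulant $D'$ with first row $d'_k=\frac{1}{n}\left(2+s(-1)^{k}\right)$ for a real $s$ with $\left|s\right|\le 2$: this $D'$ is nonnegative and has only the nonzero eigenvalues $2$ at $v_0$ and $s$ at $v_{m+1}$, so $C+tD'$ adds $2t$ to the Perron root and $st$ to the middle real eigenvalue while leaving the rest fixed. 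The step calling for the most care (though it is not a genuine obstacle) is the bookkeeping of the eigenvalue labelling by roots of unity, keeping it consistent across $C$, $D$ and $C_t$ and in particular respecting $\alpha_{n-j}=\overline{\alpha_j}$; this is exactly what guarantees that the prescribed perturbation of $\lambda_2$ is accompanied by the conjugate perturbation of $\overline{\lambda}_2$.
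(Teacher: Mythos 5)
The paper offers no proof of this theorem: it is quoted from \cite{RSGuo} and used as a black box, so there is no internal argument to compare against line by line. Your DFT argument is correct and is essentially the standard proof of the cited result; it is also exactly the mechanism the paper itself relies on in the theorem that follows, where first rows are recovered from spectra via the inverse Fourier transform $r_{s}=\frac{1}{n}\overline{F}\sigma_{1}^{T}$ and the Guo perturbation amounts to adding to the first row the nonnegative circulant row you call $tD$, with entries $\frac{2t}{n}\bigl(1\pm\cos\bigl(\theta-\frac{2\pi k}{n}\bigr)\bigr)$, so nonnegativity is preserved. Two bookkeeping points are worth tightening. First, the identification $\alpha_{1}=\lambda_{2}$, $\alpha_{n-1}=\overline{\lambda}_{2}$ is the Rojo--Soto indexing convention (eigenvalues listed in DFT order); if $\sigma$ were only given as a multiset you would centre $D$ at whatever Fourier index $j_{0}$ the perturbed conjugate pair occupies, i.e.\ use $\cos\bigl(\theta-\frac{2\pi j_{0}k}{n}\bigr)$, which your construction accommodates without change. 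Second, in the $n=2m+2$ addendum your $C+tD'$ realizes $\lambda_{1}+2t$ and $\lambda_{m+2}+st$, whereas the statement asks for $\lambda_{1}+t$ and $\lambda_{m+2}\pm t$; take $s=\pm 2$ and apply the argument with $t/2$ in place of $t$ (equivalently, use $d'_{k}=\frac{1}{n}\bigl(1\pm(-1)^{k}\bigr)$, still nonnegative). With these trivial adjustments your proof is complete and matches the approach underlying the citation.
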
%

\[
\sigma_{t}=\left(  \lambda_{1}+t,\lambda_{2},\lambda_{3},\ldots,\lambda
_{m+1},\lambda_{m+2}\pm t,\overline{\lambda}_{m+1},\ldots,\overline{\lambda
}_{3},\overline{\lambda}_{2}\right)
\]
is also the the spectrum of an $n$-by-$n$ nonnegative circulant matrix.

\begin{theorem}
Let $n=2m+2$ and consider the $n$-tuples $\sigma_{1}=\sigma(S)=\left(  \lambda_{1}%
,\lambda_{2},\lambda_{3},\ldots,\overline{\lambda}_{3},\overline{\lambda}%
_{2}\right)  $ and $\sigma_{2}=\sigma(C)=\left(  \beta_{1},\beta_{2},\beta_{3}%
,\ldots,\overline{\beta}_{3},\overline{\beta}_{2}\right)$ with, respectively, realizing
matrices $S$ and $C$ being ciculant matrices and such that the matrices $S,$ $S+C$ and
$S-C$ are nonnegative matrices (see necessary and sufficient conditions to
this fact, for instance, in \cite{RSGuo}).
Let $t_{1}$ and $t_{2}$ such that
\begin{equation}
t_{1}\geq\left\vert t_{2}\right\vert ,\label{may}
\end{equation}
then, there exists a nonnegative permutative matrix $M$ realizing the list
$\sigma_{s,t_{1}}\cup\sigma_{c,t_{2}}$, where
\[
\sigma_{s,t_{1}}=\left(  \lambda_{1}+t_{1},\lambda_{2},\lambda_{3}%
,\ldots,\lambda_{m+1},\lambda_{m+2}\pm t_{1},\overline{\lambda}_{m+1}%
,\ldots,\overline{\lambda}_{3},\overline{\lambda}_{2}\right)
\]
and
\[
\sigma_{c,t_{2}}=\left(  \beta_{1}+t_{2},\beta_{2},\beta_{3},\ldots
,\beta_{m+1},\beta_{m+2}\pm t_{2},\overline{\beta}_{m+1},\ldots,\overline
{\beta}_{3},\overline{\beta}_{2}\right)  .
\]
\end{theorem}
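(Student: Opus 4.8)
The plan is to reduce the statement to Theorem~\ref{main3} applied to two suitably Guo-perturbed circulant matrices of order $n=2m+2$. Recall that a circulant matrix of order $n$ with first row $(s_0,s_1,\dots,s_{n-1})$ has eigenvalues $\mu_j=\sum_{k=0}^{n-1}s_k\omega^{jk}$ with $\omega=e^{2\pi i/n}$; here $\mu_0=\sum_k s_k$ and, since $n=2m+2$, the eigenvalue $\mu_{m+1}=\sum_k(-1)^k s_k$ is real because $\omega^{m+1}=-1$. Examining the construction underlying Theorem~\ref{guoper} (from \cite{RSGuo}), the circulant realizing $(\lambda_1+t,\lambda_2,\dots,\lambda_{m+1},\lambda_{m+2}\pm t,\overline\lambda_{m+1},\dots,\overline\lambda_2)$ is obtained from the one realizing $(\lambda_1,\dots,\overline\lambda_2)$ by replacing its first row $(s_k)$ with $(s_k+\frac{t}{n}(1\pm(-1)^k))$; equivalently, by adding $t\,R$, where $R$ is the circulant whose first row is $\frac1n(1\pm(-1)^k)_k$, so that $R$ is entrywise nonnegative (its entries are $0$ or $\frac2n$). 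We read the two $\pm$ signs occurring in the statement as being the same sign, so that one and the same $R$ is used in both perturbations.

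First I would set $S'=S+t_1R$ and $C'=C+t_2R$. Since the spectrum of a circulant is determined by its first row, and the above correction perturbs precisely the two eigenvalues $\mu_0$ and $\mu_{m+1}$ (by $t$ and $\pm t$) while leaving the remaining conjugate pairs untouched, $S'$ is a circulant with spectrum $\sigma_{s,t_1}$ and $C'$ is a circulant with spectrum $\sigma_{c,t_2}$. Being circulant of the same order, $S'$ and $C'$ are $\varphi$-permutative for the cyclic shift $\varphi$, hence permutatively equivalent. Note that $C'$ need not itself be nonnegative when $t_2<0$, but only $S'$ and the combinations $S'\pm C'$ will matter below.

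The main step is to verify the hypothesis $|c'_i|\le s'_i$ of Theorem~\ref{main3} for the first rows of $S'$ and $C'$. By the parenthetical equivalence stated in Theorem~\ref{main2}, together with Remark~\ref{cond}, this is the same as showing that $S'$, $S'+C'$ and $S'-C'$ are nonnegative. Now $S'=S+t_1R\ge 0$ because $S\ge0$, $t_1\ge0$ and $R\ge0$; and since
\[
S'+C'=(S+C)+(t_1+t_2)R \quad\text{and}\quad S'-C'=(S-C)+(t_1-t_2)R,
\]
while $S+C$ and $S-C$ are nonnegative by hypothesis and $t_1+t_2\ge0$, $t_1-t_2\ge0$ by \eqref{may}, both $S'\pm C'$ are sums of nonnegative matrices, hence nonnegative. (Concretely: on the positions $k$ where the correction equals $\frac{2t}{n}$, one combines $|c_k|\le s_k$ with $|t_2|\le t_1$ to get $|c_k+\frac{2t_2}{n}|\le s_k+\frac{2t_1}{n}$; on the remaining positions the corrections vanish and the inequality is exactly $|c_k|\le s_k$, i.e.\ the nonnegativity of $S\pm C$.) This is the only genuinely delicate point: everything hinges on pinning down the explicit effect of the Guo perturbation on the first row, and on the observation that the single inequality $t_1\ge|t_2|$ is precisely what keeps $S'$, $S'+C'$ and $S'-C'$ simultaneously nonnegative.

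Finally I would invoke Theorem~\ref{main3} with $\gamma=1$ for the permutatively equivalent pair $S',C'$: the matrix $M:=M_{+1}$ built from $S'$ and $C'$ as in \eqref{realizingM3} is a nonnegative permutative matrix, and its spectrum equals $\sigma(S')\cup\sigma(C')=\sigma_{s,t_1}\cup\sigma_{c,t_2}$, which is exactly the required list. This completes the plan; the remaining arguments are direct appeals to results already established above.
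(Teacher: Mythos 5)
Your proposal is correct and follows essentially the same route as the paper: identify the effect of the Guo perturbation on the first row of the circulants, use $t_{1}\geq\left\vert t_{2}\right\vert$ to conclude that the perturbed matrices $\widetilde{S}$, $\widetilde{S}+\widetilde{C}$ and $\widetilde{S}-\widetilde{C}$ are nonnegative, and then apply the block construction of Theorem \ref{main3} (with $\gamma=1$) to the permutatively equivalent circulants. The only cosmetic difference is that you package the perturbation as adding the explicit nonnegative circulant $R$, whereas the paper writes the same correction through the Fourier identity $r=\frac{1}{n}\overline{F}\sigma^{T}$ and the columns $\overline{F}\mathbf{e}_{1}$, $\overline{F}\mathbf{e}_{m+2}$.
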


\begin{proof}
Let $r_{s}=\left(  s_{1},\ldots,s_{n}\right)  ^{T}$ and $r_{c}=\left(
c_{1},\ldots,c_{n}\right)  ^{T}$ be the first row of matrices $S$ and $C,$
respectively. In \cite{RSGuo}, it is shown that these rows satisfy
\[
r_{s}=\frac{1}{n}\overline{F}\sigma_{1}^{T}\text{ and }r_{c}=\frac{1}%
{n}\overline{F} \sigma_{2}^{T}%
\]
where $F$ is the $n$ by $n$ matrix,
\[
F=\left(  \omega^{\left(  k-1\right)  \left(  j-1\right)  }\right)  _{1\leq
k,j\leq n}\text{ and }\omega=\exp\left(  \tfrac{2\pi i}{n}\right)
\] and $\overline{F}$ is the matrix conjugate of $F$. Then, if $\widetilde{r}_{s}$ and $\widetilde
{r}_{c}$ are the first row of the realizing matrices of the spectra
$\sigma_{s,t_{1}}$ and $\sigma_{c,t_{2}}$ those rows satisfy
\begin{equation}
\widetilde{r}_{s}=\frac{1}{n}\overline{F}\sigma_{s,t_{1}}^{T}\text{ and
}\widetilde{r}_{c}=\frac{1}{n}\overline{F}\sigma_{c,t_{2}}^{T}.\label{eqq}%
\end{equation}
Let $\mathbf{e}_{1}$ and $\mathbf{e}_{m+2}$ be the first and the $\left(
m+2\right)$-nd canonical vectors of $\mathbb{C}^{n}$. Adding, at first,
and after taking difference on the expressions in (\ref{eqq}) we obtain%
\begin{align*}
\widetilde{r}_{s}+\widetilde{r}_{c}  & =\frac{1}{n}\overline{F}\left(
\sigma_{s,t_{1}}^{T}+\sigma_{c,t_{2}}^{T}\right)  \\
& = r_{s}+r_{c} +\left(  t_{1}+t_{2}\right)  \overline{F} \mathbf{e}_{1} \pm \left(
t_{1}+t_{2}\right)  \overline{F}  \mathbf{e}_{m+2}
\end{align*}
and
\[
\text{ }\widetilde{r}_{s}-\widetilde{r}_{c}= r_{s}-r_{c}   +\left(  t_{1}-t_{2}\right)
\overline{F} \mathbf{e}_{1} \pm\left(  t_{1}-t_{2}\right)  \overline{F} \mathbf{e}_{m+2}  .
\]

Since $S+C$ and $S-C$ are nonnegative then  $r_{s}+r_{c}$ and $r_{s}-r_{c}$ are nonnegative. Moreover both $t_{1}+t_{2}\geq 0,t_{1}-t_{2}\geq0\ $(due to (\ref{may})) by Theorem \ref{guoper}, therefore both $\widetilde{r}_{s}+\widetilde{r}_{c}$ and $\widetilde{r}_{s}-\widetilde{r}_{c}$ are nonnegative columns. In consequence the circulant
matrices $\widetilde{S}$, $\widetilde{S+C}$ and $\widetilde{S-C}, $ whose first rows, respectively, are $\widetilde{r}_{s},$
$\widetilde{r}_{s}+\widetilde{r}_{c}$ and $\widetilde{r}_{s}-\widetilde{r}_{c},$ are nonnegative matrices and by Theorem \ref{guoper} they are still circulant matrices and nonnegative. In consequence, using the techniques from the above section the
matrix $\widetilde{M}$ obtained from the circulant matrices $\widetilde{S+C}$
and $\widetilde{S-C}$ is a permutative circulant by blocks matrix
whose spectrum is $\sigma_{s,t_{1}}\cup\sigma_{c,t_{2}}$ as required.
\end{proof}

\textbf{Acknowledgments}.
\noindent The authors would like to thank the anonymous referee for his/her careful reading and for several
valuable comments which have improved the paper.\\
Enide Andrade was supported in part by the Portuguese Foundation for Science and Technology (FCT-Funda\c{c}\~{a}o para a Ci\^{e}ncia e a Tecnologia), through CIDMA - Center for Research and Development in Mathematics and Applications, within project UID/MAT/04106/2013. M. Robbiano was partially supported by project VRIDT UCN 170403003.

\end{document}